\newtheorem{theorem}{Theorem}[section]
\newtheorem{prop}[theorem]{Proposition}
\newtheorem{lemma}[theorem]{Lemma}
\newtheorem{cor}[theorem]{Corollary}
\newcommand{\R}{{\mathbb R}}
\newcommand{\Z}{{\mathbb Z}}
\newcommand{\N}{{\mathbb N}}
\newcommand{\T}{{\mathbb T}}
\newcommand{\ham}[1]{\mathcal{X}_{#1}}
\newcommand{\op}[1]{\!\!\mathop{\rm ~#1}\nolimits}
\renewcommand{\geq}{\geqslant}
\renewcommand{\leq}{\leqslant}
\newcommand{\pscal}[2]{\langle #1,#2\rangle}
\newcommand{\abs}[1]{\left|#1\right|}
\newcommand{\h}{\hbar}
\newenvironment{remark}{\refstepcounter{theorem}\par\medskip\noindent{\bf
Remark~\thetheorem.}}{\unskip\nobreak\hfill\hbox{}}
\newenvironment{definition}{\refstepcounter{theorem}\par\medskip\noindent{\bf
Definition~\thetheorem.}}{\unskip\nobreak\hfill\hbox{}}
\newtheorem{theoremL}{Theorem}
\begin{document}

\title[Inverse spectral theory for Jaynes\--Cummings systems]{Inverse
  spectral theory for semiclassical Jaynes\--Cummings systems}
\author{Yohann Le Floch 
\,\,\,\, \,\,\,\,\,\,\,\,\,\,\,\,\,\,\,\,\,\,\,\,\,
\'Alvaro Pelayo
\,\,\,\, \,\,\,\,\,\,\,\,\,\,\,\,\,\,\,\,\,\,\,\,\,
San V\~u Ng\d
  oc} \date{}

\maketitle
\thispagestyle{empty}
 
\begin{abstract}
  Quantum semitoric systems form a large class of quantum Hamiltonian
  integrable systems with circular symmetry which has received great
  attention in the past decade. They include systems of high interest
  to physicists and mathematicians such as the Jaynes\--Cummings model
  (1963), which describes a two-level atom interacting with a
  quantized mode of an optical cavity, and more generally the
  so-called systems of Jaynes\--Cummings type. In this paper we
  consider the joint spectrum of a pair of commuting semiclassical
  operators forming a quantum integrable system of Jaynes\--Cummings
  type.  We prove, assuming the Bohr\--Sommerfeld rules hold, that if
  the joint spectrum of two of these systems coincide up to
  $\mathcal{O}(\hbar^2)$, then the systems are isomorphic.
\end{abstract}

\section{Introduction}
\label{sec:introduction}

A natural question in semiclassical analysis is whether the knowledge
of the joint spectrum of a quantum integrable system allows to
determine the classical dynamics of the underlying integrable
system. Pursuing this question in such generality has been made
possible thanks to the development of semiclassical analysis with
microlocal techniques (see for instance the recent books by
Dimassi\--Sj\"ostrand \cite{DiSj99},
Guillemin\--Sternberg~\cite{GuSt2012}, and Zworski~\cite{Zw2012} and
the references therein) which nowadays permits a constant interaction
between symplectic geometry and spectral theory.  In particular, these
techniques led to the resolution of the inverse spectral question in a
number of cases; for instance: (i) compact toric integrable systems,
in the context of Berezin\--Toeplitz quantization~\cite{ChPeVN2013};
(ii) semiglobal inverse problem near the so called ``focus\--focus"
singularities of 2D integrable systems, in the context of
$\hbar$\--pseudodifferential quantization~ \cite{PeVN2013}; (iii)
inverse theory for the Laplacian on surfaces of
revolution~\cite{Ze98}; (iv) $1$\--dimensional pseudodifferential
operators with Morse symbol~\cite{VN2011}. The flexibility of
microlocal analysis makes us hope that more general integrable systems
will be treated in the future.  An interesting step is to understand
what happens for \emph{semitoric} systems on $4$-dimensional phase
spaces \cite{PeVN2011}, which form an important extension of toric
systems.

\begin{definition}
  \label{def:semitoric}
  A $\mathcal{C}^\infty$ \emph{classical integrable system} $F:=(J,H)
  \colon M \to \R^2$ on a connnected symplectic $4$\--dimensional
  manifold $(M,\omega)$ is \emph{semitoric} if:
  \begin{enumerate}[({\rm H}.i)]
  \item\label{Hii} $J$ is the momentum map of an effective Hamiltonian
    circle action.
  \item \label{Hiii} The singularities of $F$ are non-degenerate with
    no hyperbolic component.
  \item \label{H-proper} $J$ is a proper map (i.e., the preimages of
    compact sets are compact).
  \end{enumerate}
  A \emph{quantum semitoric integrable system} $(P,Q)$ is given by two
  semiclassical commuting self\--adjoint operators whose principal
  symbols form a classical semitoric integrable system. The notion of
  semiclassical operators that we use is defined in
  Section~\ref{sec:semicl-oper}; it includes standard semiclassical
  pseudodifferential operators, and Berezin-Toeplitz operators.
\end{definition}

When $(M,\omega)$ is four-dimensional, toric systems form a particular
class of semitoric systems for which $F$ is the momentum map of a
Hamiltonian $\mathbb{T}^2$-action. The symplectic classification of
toric systems was done by Delzant~\cite{Del88}, and the quantum
spectral theory in the case of Berezin-Toeplitz quantization was
carried out in~\cite{ChPeVN2013}.  A simple corollary of this spectral
theory is that the image $F(M)$, which is the so-called Delzant
polytope, can be recovered from the joint spectrum; in view of the
Delzant theorem, this implies that the joint spectrum completely
determines the triple $(M,\omega,F)$ up to toric isomorphism.

The main difference with the toric case is that focus-focus
singularities can appear in a semitoric system, making the system more
difficult to describe. For instance, if there is at least one
focus-focus singularity, the image of the moment map is no longer a
convex polygon. Moreover, new symplectic invariants appear; according
to \cite{PeVN2009,PeVN2011}, a semitoric system is determined up to
isomorphisms\footnote{The notion of isomorphism for semitoric systems
  is recalled in Definition~\ref{def:iso}.} by five symplectic
invariants:
\begin{enumerate}
\item the \emph{number} of focus-focus singular values of the system;
\item a \emph{Taylor series} $\sum_{i,j \in \N} a_{ij} X^i Y^j$ for
  each focus\--focus singularity (\cite{VN2003, PeVN2009});
\item a \emph{height invariant} $h>0$ measuring the volume of certain
  reduced spaces at each focus\--focus singularity;
\item a \emph{polygonal invariant} (in fact, a family of polygons)
  obtained by unwinding the singular affine structure of the system;
\item an index associated with each pair of focus-focus singularities,
  called the \emph{twisting index}.
\end{enumerate}
Therefore, if one is able to recover these five invariants from the
semiclassical joint spectrum of a quantum integrable system quantizing
$(J,H)$, then in effect one can recover the triple $(M,\omega,F)$ up
to the appropriate notion of isomorphism. From \cite{PeVN2013}, (2)
can be recovered. The goal of this paper is to show that this is the
case for invariants (1) to (4).  Let us be more precise and state our
main result. We will say that a semitoric integrable system $F =
(J,H)$ is \emph{simple} if it satisfies the following: if $m$ is a
focus-focus critical point for $F$, then $m$ is the unique critical
point of the level set $J^{-1}(J(m))$. Our main theorem is the
following.

\begin{theoremL} \label{thm:main} Let $(P,Q)$ be a quantum simple
  semitoric system on $M$ for which the Bohr\--Sommerfeld rules
  hold. Then from the knowledge of the semiclassical joint spectrum
  ${\rm JointSpec}(P,Q) + \mathcal{O}(\hbar^2)$, one can recover the
  four following invariants of the associated classical semitoric
  system:
  \begin{enumerate}
  \item[{\rm (1)}] the number $m_{f}$ of focus-focus values,
  \item[{\rm (2)}] the Taylor series associated with each focus-focus
    value,
  \item[{\rm (3)}] the height invariant associated with each
    focus-focus value,
  \item[{\rm (4)}] the polygonal invariant of the system.
  \end{enumerate}
\end{theoremL}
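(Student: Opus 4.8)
The plan is to reconstruct the four invariants from the semiclassical joint spectrum in stages, proceeding from the coarsest to the finest structure and using at each stage that the joint spectrum determines, up to $\mathcal{O}(\hbar^2)$, the local symplectic data of the underlying system via the Bohr--Sommerfeld rules. First I would show that the cloud of points $\js(P,Q)$, seen as a subset of $\R^2$ modulo $\mathcal{O}(\hbar^2)$, accumulates on the image $F(M)$, so that $F(M)$ is recovered as the limit set; here one uses that away from the critical values the joint eigenvalues are, to leading order, the values of $F$ on the Bohr--Sommerfeld Lagrangian tori, and these tori fill $F(M)$ in the semiclassical limit. The boundary $\partial F(M)$ and the elliptic part of the discriminant are then visible as the topological boundary of this limit set and as the locus where the regular lattice structure degenerates in an ``elliptic'' way (the spectrum accumulates from one side only). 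This is essentially the content of the toric part of the argument, following \cite{ChPeVN2013}.

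Second, to detect the focus-focus values I would analyze the local structure of the joint spectrum near an interior point $c$ of $F(M)$ at a fixed value $j_0$ of the circle momentum $J$. At a regular value the joint spectrum near $c$ looks like an affine lattice with mesh $\mathcal{O}(\hbar)$; at a focus-focus value the monodromy of the integral affine structure (the Duistermaat monodromy, with matrix $\begin{pmatrix}1&0\\1&1\end{pmatrix}$ in a suitable basis) forces a defect in this lattice, and the Bohr--Sommerfeld rules make this defect appear in $\js(P,Q)+\mathcal{O}(\hbar^2)$ as a characteristic failure of the lattice to close up around $c$. Counting these defects gives $m_f$, which is invariant (1); this is the step where I expect to lean most heavily on \cite{PeVN2013}, which already identifies focus-focus singularities and their Taylor series from the spectrum, and indeed invariant (2) is recovered there, so I would cite that result directly rather than reprove it.

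Third, for the height invariant (3) I would fix a focus-focus value with critical value of $J$ equal to $j_0$ and use the fact that the Bohr--Sommerfeld rule for the $J$-circle action says that the eigenvalues of $P$ (quantizing $J$) are, up to $\mathcal{O}(\hbar^2)$, of the form $\hbar(k+\text{const})$, so each vertical line $\{J = j_0 + n\hbar\}$ in the spectrum carries a well-defined number $N(n)$ of joint eigenvalues of $Q$. The height invariant $h$ is, classically, the symplectic volume of the part of the reduced space $M_{j_0}/\!/S^1$ lying below the focus-focus point; I would show that $h$ is recovered, up to $\mathcal{O}(\hbar)$, as $\hbar$ times the number of joint eigenvalues on the critical line $\{J=j_0\}$ lying below the focus-focus point, i.e. $h = \lim_{\hbar\to 0}\hbar\, N_{\text{below}}(0)$, by the usual Weyl-law counting on the one-dimensional reduced system. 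The clean statement of Bohr--Sommerfeld for the reduced operator, together with the description of the reduced space as a 2-sphere with a measured geodesic flow, makes this a counting argument.

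Finally, for the polygonal invariant (4) I would reconstruct the family of generalized polygons by integrating the affine structure read off from the spectral lattice: away from the cuts emanating from the focus-focus values the integral affine structure is that of $\R^2$ with the lattice $\hbar\Z^2$ (in the $J$-adapted frame), and choosing for each focus-focus value a half-line cut and a sign $\epsilon = \pm 1$, one unwinds the singular affine structure — exactly the Vũ Ngọc construction — to obtain a rational convex polygon, well-defined up to the action of the subgroup of $\mathrm{AGL}(2,\Z)$ generated by $\begin{pmatrix}1&0\\1&1\end{pmatrix}$ and by vertical translations, together with the choices of cuts and signs. The point is that every ingredient of this construction (the lattice, the location of the focus-focus values, the gluing maps across the cuts) has already been extracted from $\js(P,Q)+\mathcal{O}(\hbar^2)$ in the previous steps, so the polygon is determined as an element of the invariant's equivalence class. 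The main obstacle, and the step I would expect to require the most care, is the second one: making the passage from ``$\mathcal{O}(\hbar^2)$-approximate lattice with a monodromy defect'' to an exact extraction of the affine structure rigorous and uniform near the focus-focus values, where the Bohr--Sommerfeld expansion degenerates and the singular symplectic normal form of \cite{VN2003} must be invoked to control the eigenvalue asymptotics; this is precisely where the $\mathcal{O}(\hbar^2)$ accuracy (rather than merely $\mathcal{O}(\hbar)$) is used, to separate the genuine geometric defect from spurious second-order corrections.
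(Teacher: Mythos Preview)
Your broad outline --- recover $F(M)$, then locate the focus-focus values, then invoke \cite{PeVN2013} for the Taylor series, then reconstruct the affine structure and polygon, then the height --- matches the paper's architecture. But the technical core of your argument diverges substantially from the paper's, and at one point there is a genuine gap.

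The paper's engine is an explicit recovery of the period functions $(\tau_1,\tau_2)$ on the regular set (Step~2 of the proof): one forms the semiclassical Fourier transform of the spectral counting measure, applies Poisson summation, and reads off the period lattice from the semiclassical wavefront set of the resulting sum. This gives $(\tau_1,\tau_2)$ as actual smooth functions on $B_r$, not just an approximate lattice picture. Everything downstream rests on this: the focus-focus values are located (Step~3) as the points where $\tau_2$ develops a \emph{logarithmic singularity} (via \cite[Prop.~3.1]{VN2003}), not via a monodromy defect; the affine structure is built by integrating $\zeta_1\,{\rm d}c_1+\zeta_2\,{\rm d}c_2$ (Step~5); and the height invariant is obtained as a limit of the developing map $\Phi$ along a vertical segment approaching the focus-focus value (Step~6). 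Your proposal replaces this period-recovery step with a more heuristic ``read the affine lattice directly from the $\mathcal{O}(\hbar^2)$-approximate point cloud'', which is plausible in spirit but is not what the paper does, and you do not indicate how to make it uniform or how to extract the periods as functions.

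There is also a gap in your focus-focus step. You write that \cite{PeVN2013} ``already identifies focus-focus singularities and their Taylor series from the spectrum''; in fact that paper takes the \emph{location} of the focus-focus value as input and recovers the Taylor series semiglobally. Locating the focus-focus values globally is a separate task, and your monodromy-defect idea, while not unreasonable, is not developed: the defect is a global phenomenon visible only after encircling the singular value, so you would need a procedure for selecting loops in the interior of $F(M)$ and detecting nontrivial holonomy of the spectral lattice, robust to $\mathcal{O}(\hbar^2)$ errors. The paper sidesteps this entirely by using the local blow-up of $\tau_2$.

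Finally, your Weyl-law approach to the height invariant is acknowledged in the paper (see the remark closing the proof) as a legitimate alternative, but the authors deliberately avoid it as more technical; their route via $\Phi$ requires no additional analytic input once the affine structure is in hand.
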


Of course, Theorem~\ref{thm:main} is not entirely satisfactory if one
has in mind the problem of completely recovering the classical system
from the joint spectrum of its quantum counterpart. However, there is
one case where we can say more: for the simplest examples of semitoric
integrable systems, which we call systems \emph{of Jaynes\--Cummings
  type}. The characteristic of such a system is to display only one
focus-focus singularity.  One of the simplest yet most important
models in classical and quantum mechanics was proposed by Jaynes and
Cummings \cite{JaCu1963, Cu1965} in 1963, and it is now known as the
\emph{Jaynes\--Cummings model}.\footnote{The Jaynes-Cummings model was
  initially introduced to describe the interaction between an atom
  prepared in a mixed state with a quantum particle in an optical
  cavity. It was found to apply to many physical situations (quantum
  chemistry, quantum optics, quantum information theory, etc.) because
  it represents the easiest way to have a finite dimensional state
  (like a spin) interact with an oscillator.
}The Jaynes\--Cummings model is obtained by coupling a spin with a
harmonic oscillator. In this way one obtains a physical system with
phase space $S^2 \times \R^2$ and Hamiltonian functions $
J:=\frac{u^2+v^2}{2}+z$, $H=\frac{ux+vy}{2}$, where $(x,y,z)$ denotes
the point in the $2$\--sphere $S^2\subset\R^3$ and $(u,v)$ denote
points in $\R^2$ ($J$ is the momentum map for the combined rotational
$S^1$\--actions about the origin in $\R^2$ and about the vertical axes
on $S^2$).  Recently the second and third authors described in full
the semiclassical spectral theory of this system~\cite{PeVN2012}.

\begin{definition}
  A classical integrable system $F:=(J,H) \colon M \to \R^2$ on a symplectic $4$\--manifold
  $(M,\omega)$ is of \emph{Jaynes\--Cummings type} if:
  \begin{itemize}
  \item[(a)] $F$ is a semitoric system;
  \item[(b)] $F$ has one, and only one, singularity of focus\--focus
    type.
  \end{itemize}
  A \emph{quantum integrable system $(P,Q)$ of Jaynes\--Cummings type}
  is given by two semiclassical commuting self\--adjoint operators
  whose principal symbols form a classical integrable system of
  Jaynes\--Cummings type.
\end{definition}

The Jaynes\--Cummings model is a particular example of a system of
Jaynes\--Cummings type.  Jaynes\--Cummings type systems form a large
class of integrable Hamiltonian systems because the structure of the
singularity in part (b) is extremely rich, and it is classified by a
Taylor series $\sum_{i,j \in \N} a_{ij} X^i Y^j$, according to
\cite{VN2003} (two such singularities are symplectically equivalent if
and only if each and everyone of the coefficients in the Taylor series
coincide for both singularities). Moreover, by \cite{VN2003, PeVN2011}
every such Taylor series can be realized as the Taylor series
invariant of an integrable system (in fact, of \emph{many}
inequivalent such systems). Accordingly, the moduli space of
Jaynes\--Cummings type systems is, from the point of view of
Hamiltonian dynamics, extremely rich.  As a corollary of Theorem
\ref{thm:main}, we solve the inverse spectral problem for quantum
integrable systems of Jaynes\--Cummings type.
\begin{theoremL} \label{main2} Let $(P,Q)$ be a quantum integrable
  system of Jaynes-Cummings type on $M$ for which the
  Bohr\--Sommerfeld rules hold.  Then from the knowledge of ${\rm
    JointSpec}(P,Q) + \mathcal{O}(\hbar^2)$, one can recover the
  principal symbol $\sigma(P,Q)$ up to isomorphisms of semitoric
  integrable systems.
\end{theoremL}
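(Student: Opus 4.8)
The plan is to obtain Theorem~\ref{main2} as a corollary of Theorem~\ref{thm:main} together with the symplectic classification of semitoric systems of Pelayo and V\~u Ng\d{o}c \cite{PeVN2009, PeVN2011}. First I would note that a quantum integrable system of Jaynes\--Cummings type is in particular a quantum \emph{simple} semitoric system: since the underlying classical system $F=(J,H)$ has exactly one focus\--focus singularity, no fiber $J^{-1}(J(m))$ can contain two focus\--focus critical points, and one checks directly that the single focus\--focus point is the only critical point on its fiber, so the simplicity hypothesis of Theorem~\ref{thm:main} holds automatically. Therefore Theorem~\ref{thm:main} applies, and from ${\rm JointSpec}(P,Q)+\mathcal{O}(\hbar^2)$ we recover invariants (1)--(4) of the classical system $\sigma(P,Q)$: the number $m_f$ of focus\--focus values (here $m_f=1$, forced by the Jaynes\--Cummings hypothesis and also seen in the spectrum), the Taylor series of the unique focus\--focus singularity, the height invariant, and the polygonal invariant.

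The only symplectic invariant of \cite{PeVN2011} not provided by Theorem~\ref{thm:main} is the fifth one, the twisting index. I would show that it carries no information in the Jaynes\--Cummings setting: by its definition the twisting index is an index attached to each \emph{pair} of focus\--focus singularities (it compares the preferred "cartographic" labelings built around the different nodes), so when there is a single focus\--focus point there are no such pairs, and any two semitoric systems with one focus\--focus point and identical invariants (1)--(4) automatically agree on invariant (5). Concretely I would recall from \cite{PeVN2011} the construction of the labeled (weighted) polygon associated to the choices of reference toric momentum maps near the nodes, and verify that with a single node this labeling is trivial (or canonically normalized), so that the tuple of invariants of a Jaynes\--Cummings type system is completely described by (1)--(4).

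Finally I would invoke the classification theorem of \cite{PeVN2009, PeVN2011}: two semitoric systems are isomorphic, in the sense of Definition~\ref{def:iso}, if and only if their five symplectic invariants coincide. Since invariants (1)--(4) of $\sigma(P,Q)$ are determined by ${\rm JointSpec}(P,Q)+\mathcal{O}(\hbar^2)$ by Theorem~\ref{thm:main}, and invariant (5) is vacuous for a single focus\--focus point, the full list of invariants of $\sigma(P,Q)$ is determined by the semiclassical joint spectrum; hence $\sigma(P,Q)=(J,H)$ is recovered up to isomorphism of semitoric integrable systems, which is the assertion of Theorem~\ref{main2}.

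The main obstacle is not the logical skeleton, which is short, but making the statement ``the twisting index is vacuous for one focus\--focus point'' fully rigorous: one must return to the precise definition of the twisting index in \cite{PeVN2011}, which involves a choice of reference toric momentum map near each node and the comparison of the induced integer labels, check that with a single node this comparison is empty (or yields a canonical value), and confirm that this is compatible with the normalization used in the uniqueness part of the classification theorem, so that the classification really does apply on the basis of invariants (1)--(4) alone. A secondary, more routine point is the verification that a Jaynes\--Cummings type system indeed satisfies the simplicity condition, so that Theorem~\ref{thm:main} is applicable.
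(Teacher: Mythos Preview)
Your proposal is correct and follows exactly the paper's approach: Theorem~\ref{main2} is presented there as an immediate corollary of Theorem~\ref{thm:main} together with the Pelayo--V\~u Ng\d oc classification, the point being that the twisting index is attached to \emph{pairs} of focus-focus singularities and is therefore vacuous when $m_f=1$. The only extra content in your write-up is that you spell out the twisting-index triviality and flag the simplicity hypothesis, both of which the paper leaves implicit; note, however, that your assertion that simplicity ``holds automatically'' is not literally immediate from the paper's definition (which asks that the focus-focus point be the unique critical point of \emph{any} type on its $J$-fiber), so your caveat at the end is well placed.
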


This theorem gives the first global inverse spectral result that the
authors are aware of for integrable Hamiltonian systems with
focus-focus singularities (and hence no global action-angle
variables).  In the context of Hamiltonian toral actions (eg. toric
integrable systems), all singularities are of elliptic type, which is
strongly related to the dynamical and spectral rigidity of such
systems~\cite{ChPeVN2013}. We believe that allowing focus-focus
singularities, which have a much larger moduli space, is an important
step forward in the study of the inverse spectral problem for general
integrable systems.

The problem treated in this paper belongs to a class of semiclassical
inverse spectral questions which has attracted much attention in
recent years, e.g.  \cite{LF14, Gu95, Ha13, CoGu2011, PPV14, phan14,
  VN2011}, which goes back to pioneer works of B\'erard~\cite{Be76},
Br\"uning\--Heintze~\cite{BH84}, Colin de Verdi{\`e}re~\cite{CdV,
  CdV2}, Duistermaat\--Guillemin~\cite{DuGu1975}, and
Guillemin\--Sternberg \cite{GuSt}, in the 1970s/1980s, and are closely
related to inverse problems that are not directly semiclassical but do
use similar microlocal techniques for some integrable systems, as
in~\cite{Ze98} (see also~\cite{Ze04} and references therein).

\vskip 1em

We conclude this section by a natural question.  The following
corollary of Theorem \ref{thm:main} directly follows from the
symplectic classification~\cite{PeVN2009} of semitoric systems:
\begin{cor}
  Let $(P,Q)$ and $(P',Q')$ be quantum simple semitoric systems on $M$ and
  $M'$, respectively, for which the Bohr\--Sommerfeld rules hold.  If
  \begin{eqnarray} {\rm JointSpec}(P,Q)={\rm JointSpec}(P',Q') +
    \mathcal{O}(\hbar^2), \label{x}
  \end{eqnarray}
  and if the twisting index invariants of $\sigma(P,Q)$ and
  $\sigma(P',Q')$ are equal, then $\sigma(P,Q)$ and $\sigma(P',Q')$
  are isomorphic as semitoric integrable systems.
\end{cor}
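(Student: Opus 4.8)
The plan is to obtain the corollary as a formal consequence of Theorem~\ref{thm:main} together with the symplectic classification of semitoric systems \cite{PeVN2009,PeVN2011}. The first step is to read Theorem~\ref{thm:main} correctly: it asserts that each of the invariants (1)--(4) of a simple semitoric system --- the number $m_f$ of focus-focus values, the Taylor series attached to each focus-focus value, the height invariant, and the polygonal invariant --- is \emph{determined by} the spectral datum ${\rm JointSpec}(P,Q)+\mathcal{O}(\hbar^2)$, i.e.\ each is the value of a well-defined map whose argument is that datum. Hence, if \eqref{x} holds, then applying Theorem~\ref{thm:main} to $(P,Q)$ and to $(P',Q')$ and comparing the two reconstructions shows that invariants (1)--(4) of the classical system $\sigma(P,Q)$ coincide with those of $\sigma(P',Q')$.

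The second step is to bring in the fifth semitoric invariant. By hypothesis, the twisting index invariants of $\sigma(P,Q)$ and $\sigma(P',Q')$ are equal. Combined with the first step, this gives that all five of the symplectic invariants of a semitoric system listed in the introduction agree for $\sigma(P,Q)$ and $\sigma(P',Q')$.

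The third and final step is to invoke the classification theorem of Pelayo--V\~u Ng\d oc \cite{PeVN2009,PeVN2011}, according to which a semitoric integrable system is determined, up to isomorphism of semitoric integrable systems, by the list of its five symplectic invariants. Applied to $\sigma(P,Q)$ and $\sigma(P',Q')$, whose invariants we have just matched one by one, this yields the asserted isomorphism. I do not expect any genuine obstacle: all the substance sits in Theorem~\ref{thm:main}, and the corollary follows by a purely formal argument. The only points worth a moment's attention are that both results being combined are stated in the \emph{simple} category used throughout the paper, which is exactly the hypothesis of the corollary, and that the conclusion is intrinsic --- it does not require identifying $M$ with $M'$, since an isomorphism of semitoric systems already incorporates an ambient symplectomorphism between the two phase spaces.
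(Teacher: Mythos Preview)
Your argument is correct and matches the paper's own justification: the corollary is stated as a direct consequence of Theorem~\ref{thm:main} together with the symplectic classification of semitoric systems \cite{PeVN2009}. Your three-step unpacking (recover invariants (1)--(4) from the spectrum, add the assumed equality of twisting indices, then invoke the classification) is exactly the intended reasoning.
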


In view of this result, one question remains: can one obtain the
twisting index invariant of a semitoric system from the data of the
joint spectrum of the corresponding quantum system? A positive answer
to this question would lead to the definition of a new quantum
invariant which would be quite robust (since the twisting index
between two focus-focus singularities is just an integer).

\section{Preliminaries} \label{ba}

Let $(M,\omega)$ be a smooth, connected $4$\--dimensional symplectic
manifold.

\subsection{Integrable systems}
An \emph{integrable system} $(J,H)$ on $(M,\omega)$ consists of two
Poisson commuting functions $J,H \in
\mathcal{C}^{\infty}(M;\mathbb{R})$ i.e.~:
 $$
 \{J,H\}:=\omega(\ham{J},\, \ham{H})=0,
 $$
 whose differentials are almost everywhere linearly independent
 $1$\--forms.  Here $\ham{J},\ham{H}$ are the Hamiltonian vector
 fields induced by $J,H$, respectively, via the symplectic form
 $\omega$: $ \omega(\ham{J},\cdot)=-{\rm d}J$,
 $\omega(\ham{H},\cdot)=-{\rm d}H$. Moreover, the function $F=(J,H)$
 will be assumed to be proper throughout this paper.
  
 For instance, let $M_0={\rm T}^*\T^2$ be the cotangent bundle of the
 torus $\T^2$, equipped with canonical coordinates
 $(x_1,x_2,\xi_1,\xi_2)$, where $x\in \T^2$ and $\xi\in {\rm
   T}^*_x\T^2$. The linear system $$(J_0,H_0):=(\xi_1, \xi_2)$$ is
 integrable.

 An \emph{isomorphism} of integrable systems $(J,\,H)$ on $(M,\omega)$
 and $(J',\,H')$ on $(M',\omega')$ is a diffeomorphism $ \varphi
 \colon M \to M'$ such that $\varphi^*\omega'=\omega$ and
$$
\varphi^*(J',\,H')=(f_1(J,\,H),\,f_2(J,\,H))
$$
for some local diffeomorphism $(f_1,f_2)$ of $\R^2$. This same
definition of isomorphism extends to any open subsets $U\subset M$,
$U'\subset M'$ (and this is the form in which we will use it
later). Such an isomorphism will be called \emph{semiglobal} if $U,U'$
are respectively saturated by level sets $\{J=\text{const}_1,
H=\text{const}_2\}$ and $\{J'=\text{const}'_1, H'=\text{const}'_2\}$.

If $F=(J,H)$ is an integrable system on $(M,\omega)$, consider a point
$c\in\R^2$ that is a \emph{regular value} of $F$, and such that the
fiber $\Lambda_{c} = F^{{-1}}(c)$ is compact and connected. Then, by
the action-angle theorem~\cite{duistermaat}, a saturated neighborhood
of the fiber is \emph{isomorphic} in the previous sense to the above
linear model on $M_0={\rm T}^*\T^2$. Therefore, all such regular
fibers (called \emph{Liouville tori}) are isomorphic in a
neighborhood.

However, the situation changes drastically when the condition that $c$
be regular is violated. For instance, it has been proved
in~\cite{VN2003} that, when $c$ is a so-called \emph{focus-focus}
critical value, an infinite number of equations has to be satisfied in
order for two systems to be semiglobally isomorphic near the critical
fiber.

\subsection{Semitoric systems}

Semitoric systems (Definition~\ref{def:semitoric}) form a particular
class of integrable systems admitting an $S^1$ symmetry. It is
therefore natural to introduce a suitable notion of isomorphism for
such systems, which mixes the general notion defined in the previous
section with the more rigid one coming from Hamiltonian
$S^1$-manifolds.

\begin{definition} \label{def:iso} The semitoric systems
  $(M_1,\, \omega_1,\,F_1:=(J_1,\,H_1))$ and \\ $(M_2,\, \omega_2,\,
  F_2:=(J_2,\,H_2))$ are \emph{isomorphic} if there exists a
  symplectomorphism $\varphi: M_1 \to M_2$ such that
  $\varphi^*(J_2,\,H_2)=(J_1,\,h(J_1,\,H_1))$ for a smooth $h$ such
  that $\frac{\partial h}{\partial H_1}>0$.
\end{definition}

\subsection{The period lattice}
\label{subsect:period}

Let $F=(J,H)$ be an integrable system on a $4$-dimensional symplectic
manifold. For any regular value $c$ of $F$, the set of points $(t,u)
\in \R^2$ such that the vector field $t\ham{J} + u\ham{H}$ has a
$2\pi$-periodic flow on $\Lambda_{c}$ is a sublattice of $\R^2$ called
the \emph{period lattice} \cite{duistermaat}. When $c$ varies in the
set of regular values of $F$, the collection of the period lattices is
a Lagrangian subbundle of $T^*\R^2$, called the period bundle.

Coming back to our case where $F=(J,H)$ is a semitoric system, there
is a natural way to construct a basis of this lattice. Firstly, since
$J$ generates a $S^1$-action, $(1,0)$ belongs to the period
lattice. Secondly, define two real numbers $\tau_{1}(c), \tau_{2}(c)$
as follows: choose a point $m \in \Lambda_{c}$, and define
$\tau_{2}(c) > 0$ as the time of first return for the Hamiltonian flow
associated with $H$ to the trajectory of the Hamiltonian flow of $J$
passing through $m$. Let $\tau_{1}(c) \in [0,2\pi)$ be the time that
it takes to come back to $m$ following the flow of $\ham{J}$. Because
of the commutativity of the Hamiltonian flows of $J$ and $H$, the
values of $\tau_{1}(c), \tau_{2}(c)$ do not depend on the choice of
the starting point $m \in \Lambda_{c}$. The vector field
$\tau_{1}(c)\ham{J} + \tau_{2}(c)\ham{H}$ defines a $1$-periodic flow;
hence, if we define
\begin{equation}
  \zeta_{1}(c) = \frac{\tau_{1}(c)}{2\pi} , \quad \zeta_{2}(c) = \frac{\tau_{2}(c)}{2\pi},
  \label{equ:zeta} 
\end{equation}
then $(\zeta_{1}(c), \zeta_{2}(c))$ and $(0, 1)$ form a basis of the
period lattice.

\subsection{Semiclassical operators}
\label{sec:semicl-oper}
Let $I \subset (0,1]$ be any set which accumulates at $0$.  If
$\mathcal{H}$ is a complex Hilbert space, we denote by
$\mathcal{L}(\mathcal{H})$ the set of linear (possibly unbounded)
self-adjoint operators on $\mathcal{H}$ with a dense domain.

A space $\Psi$ of \emph{semiclassical operators} is a subspace of
$\prod_{\hbar \in I} \mathcal{L}(\mathcal{H}_{\hbar})$, containing the
identity, and equipped with a weakly positive principal symbol map,
which is an $\R$-linear map $$\sigma \colon \Psi \to
\mathcal{C}^{\infty}(M;\, \R),$$ with the following properties:
\begin{enumerate}
\item \label{item:normalization} $\sigma(I)=1$; (\emph{normalization})
\item \label{item:product} if $P, Q$ are in $\Psi$ and if the
  composition $P\circ Q$ is well defined and is in $\Psi$, then
  $\sigma(P\circ Q) = \sigma(P)\sigma(Q)$; (\emph{product formula})
\item \label{item:garding }if $\sigma(P)\geq 0$, then there exists a
  function $\h\mapsto\epsilon(\h)$ tending to zero as $\h\to 0$, such
  that $P\geq -\epsilon(\h)$, for all $\h\in I$. (\emph{weak
    positivity})
\end{enumerate}

If $P=(P_{\hbar})_{\hbar \in I} \in \Psi$, the image $\sigma(P)$ is
called the \emph{principal symbol of $P$}.

There are two major examples of such semiclassical operators. One is
given by semiclassical pseudodifferential operators, as described for
instance in~\cite{DiSj99} or~\cite{Zw2012}. The second one is less
well known, but developing very fast: semiclassical (or Berezin) -
Toeplitz operators, as described in~\cite{BorPauUri,Ch2003b, Ch2006b,
  Ch2006a, Ch2007,MaMar2008,Schli2010} following the pioneer
work~\cite{BG81}.

\subsection{Semiclassical spectrum}

Recall that when $A$ and $B$ are unbounded self-adjoint operators, they
are said to commute when their projector-valued spectral measures
commute.

If $P=(P_{\hbar})_{\hbar \in I}$ and $Q=(Q_{\hbar})_{\hbar \in I}$ are
semiclassical operators on $(\mathcal{H}_\h)_{\hbar \in I}$, in the
sense of Section~\ref{sec:semicl-oper}, we say that they commute if
for each $\hbar \in I$ the operators $P_{\hbar}$ and $Q_{\hbar}$
commute.

 \begin{figure}[h]
   \centering
   \includegraphics[width=0.35\linewidth]{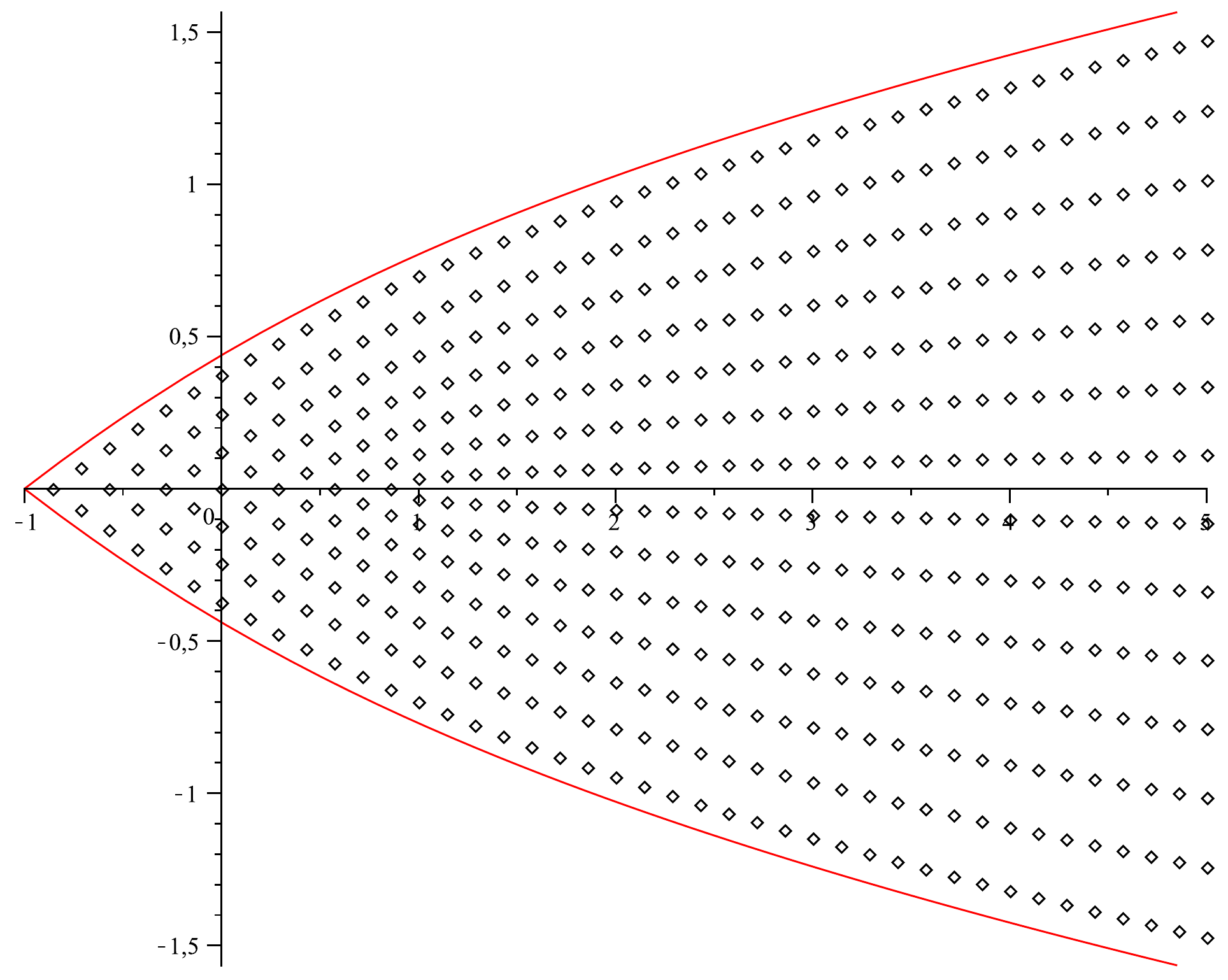}
   \caption{Semiclassical joint spectrum of the Jaynes\--Cummings model.}
   \label{fig:spectrumapprox2}
 \end{figure}

 If $P$ and $Q$ commute, we may define for fixed $\hbar$, the
 \emph{joint spectrum} of $(P_{\hbar},Q_{\hbar})$ to be the support of
 the joint spectral measure. It is denoted by
 $\op{JointSpec}(P_{\hbar},\,Q_{\hbar})$. If $\mathcal{H}_\h$ is
 finite dimensional (or, more generally, when the joint spectrum is
 discrete), then
 $$
 \op{JointSpec}(P_{\hbar},\,Q_{\hbar})=\Big\{(\lambda_1,\lambda_2)\in
 \R^2\,\, |\,\, \exists v\neq 0,\,\, P_{\hbar} v = \lambda_1
 v,\,\,Q_{\hbar} v = \lambda_2 v \Big\}.
 $$
 The \emph{joint spectrum} of $P,Q$ is the collection of all joint
 spectra of $(P_{\hbar},Q_{\hbar})$, $\hbar \in I$. It is denoted by
 $\op{JointSpec}(P,\,Q)$. For convenience of the notation, we will
 also view the joint spectrum of $P,Q$ as a set depending on $\hbar$.

 \subsection{Joint spectrum and image of the joint principal symbol}

\begin{prop}\label{prop:joint-spectrum-image}
  If $F:=(J,H) \colon M \to \mathbb{R}^2$ is an integrable system on a
  $4$\--dimensional connected symplectic manifold and $P, Q$ are
  commuting semiclassical operators with principal symbols $J,H \colon
  M \to \mathbb{R}$, then
  \begin{align*}
    E \notin F(M) \Rightarrow \quad & \exists \varepsilon > 0 \quad
    \exists \hbar_0 \in I \quad \forall \hbar \leq \hbar_0 \in
    I,\nonumber \\ & \mathrm{JointSpec}(P_{\hbar},Q_{\hbar}) \cap
    B(E,\varepsilon) = \emptyset.
  \end{align*}
\end{prop}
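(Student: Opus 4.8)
The plan is to prove the contrapositive in spirit: if $E \notin F(M)$, then locally near $E$ the pair $(J,H)$ is, as a vector-valued function, bounded away from $E$, and this forces the joint spectrum to avoid a neighborhood of $E$ for small $\hbar$. First I would fix $E \notin F(M)$. Since $F$ is proper and $M$ is a manifold, $F(M)$ is closed, so there exists $\varepsilon > 0$ with $B(E, 2\varepsilon) \cap F(M) = \emptyset$; equivalently, writing $E = (E_1, E_2)$, the function $g := (J - E_1)^2 + (H - E_2)^2 \in \mathcal{C}^\infty(M; \R)$ satisfies $g \geq 4\varepsilon^2 > 0$ everywhere on $M$.

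Next I would exploit the three axioms of semiclassical operators from Section~\ref{sec:semicl-oper}. Consider the operator $G_\hbar := (P_\hbar - E_1)^2 + (Q_\hbar - E_2)^2$, which is well-defined and self-adjoint (at least on a suitable domain; since $P$ and $Q$ commute one may work on the joint spectral subspaces, or note that in the cases of interest the relevant operators lie in $\Psi$). By the normalization and product-formula axioms, its principal symbol is $\sigma(G_\hbar) = (J - E_1)^2 + (H - E_2)^2 = g \geq 4\varepsilon^2$. Hence $\sigma(G_\hbar - 4\varepsilon^2) = g - 4\varepsilon^2 \geq 0$, and the weak positivity (Gårding-type) axiom yields a function $\hbar \mapsto \epsilon(\hbar) \to 0$ with $G_\hbar - 4\varepsilon^2 \geq -\epsilon(\hbar)$, i.e.\ $G_\hbar \geq 4\varepsilon^2 - \epsilon(\hbar)$ for all $\hbar \in I$. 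Choose $\hbar_0$ so that $\epsilon(\hbar) < 3\varepsilon^2$ for $\hbar \leq \hbar_0$; then $G_\hbar \geq \varepsilon^2$ for all such $\hbar$.

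Finally I would translate this operator bound into the statement about the joint spectrum. If $(\lambda_1, \lambda_2) \in \mathrm{JointSpec}(P_\hbar, Q_\hbar) \cap B(E, \varepsilon)$, then there is a nonzero joint (approximate) eigenvector, or more precisely the joint spectral projection onto a small ball around $(\lambda_1,\lambda_2)$ is nonzero; applying $G_\hbar$ to vectors in the range of that projection, the spectral theorem for the commuting pair gives that $G_\hbar$ restricted there is close to $(\lambda_1 - E_1)^2 + (\lambda_2 - E_2)^2 < \varepsilon^2$, contradicting $G_\hbar \geq \varepsilon^2$. Hence $\mathrm{JointSpec}(P_\hbar, Q_\hbar) \cap B(E, \varepsilon) = \emptyset$ for all $\hbar \leq \hbar_0$, which is the assertion. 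The main technical obstacle is the domain/self-adjointness bookkeeping for the unbounded operators $P_\hbar, Q_\hbar$ and the well-definedness of $G_\hbar$ within the axiomatic framework $\Psi$; this is handled by restricting attention to the joint spectral subspaces where the functional calculus for the commuting pair $(P_\hbar, Q_\hbar)$ makes everything rigorous, so that the weak-positivity axiom can be applied on each such piece.
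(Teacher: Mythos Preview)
Your proof is correct and follows essentially the same approach as the paper: form the operator $(P_\hbar-E_1)^2+(Q_\hbar-E_2)^2$, use the normalization, product, and weak-positivity axioms to get a uniform lower bound for small $\hbar$, and then contradict this bound via the joint spectral calculus by producing vectors on which the operator is small. The only cosmetic difference is that you quantify $\varepsilon$ at the outset and exclude the whole ball $B(E,\varepsilon)$ directly, whereas the paper first excludes $E$ itself and then invokes uniformity of the constant over a small ball (using that $F(M)$ is closed); both routes amount to the same argument.
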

This proposition is well-known for pseudodifferential and Toeplitz
operators; it is interesting to notice that, in fact, it directly
follows from the axioms we chose for semiclassical operators in
Section~\ref{sec:semicl-oper}.
\begin{proof}
  If $E = (E_{1}, E_{2})$ does not belong to $F(M)$, then the function
  \[ f = (J-E_{1})^2 + (H-E_{2})^2 \] never vanishes. Thus, by the
  normalization, the product rule and the weak positivity of the
  principal symbol (items~\ref{item:normalization},~\ref{item:product}
  and ~\ref{item:garding } in section~\ref{sec:semicl-oper}), we have
  \begin{equation}
    (P-E_1)^2 + (Q-E_2)^2 \geq C,
    \label{eq:positive}
  \end{equation}
  for some constant $C>0$, when $\h$ is small enough. If fact, since
  $F(M)$ is closed (because $F$ is proper), the same holds uniformly
  when $E$ varies is a small ball. Let $\Pi_Q(d\lambda)$ and
  $\Pi_P(d\mu)$ be the spectral measures of $P$ and $Q$ respectively
  (now $\h$ is fixed). We have
  \[
  (P-E_1)^2 + (Q-E_2)^2 = \int (\lambda-E_1)^2 \Pi_P({\rm d}\lambda) +
  \int (\mu-E_2)^2 \Pi_Q({\rm d}\mu).
  \]
  Suppose that $(E_1,E_2)$ belongs to the joint spectrum of
  $(P,Q)$. Then for each $n\geq 0$ one can find a vector $u_n$ of norm
  1 such that
  \[
  u_n\in \textup{Ran}(\Pi_P([E_1-\tfrac1n, E_1+\tfrac1n])) \cap
  \textup{Ran}(\Pi_Q([E_2-\tfrac1n, E_2+\tfrac1n])).
  \]
  Then
  \begin{align*}
    \abs{\pscal{(P-E_1)^2u_n}{u_n}} = & \abs{\int_{[E_1-\tfrac1n,
        E_1+\tfrac1n]} (\lambda-E_1)^2 \pscal{\Pi_P({\rm d}\lambda)u_n}{u_n}}\\
    \leq & \frac{1}{n^2} \int \abs{\pscal{\Pi_P({\rm
          d}\lambda)u_n}{u_n}} \leq \frac{1}{n^2}.
  \end{align*}
  Similarly, $\abs{\pscal{(Q-E_2)^2u_n}{u_n}}\leq
  \frac{1}{n^2}$. Letting $n\to\infty$, we
  contradict~\eqref{eq:positive}. Thus $E\not\in
  \textup{JointSpec(P,Q)}$, which proves the proposition.

\end{proof}

\subsection{Bohr\--Sommerfeld rules}
Recall that the \emph{Hausdorff distance} between two bounded subsets
$A$ and $B$ of $\R^2$ is
 $$
 {\rm d}_H(A,\,B):= \inf\{\epsilon > 0\,\, | \,\ A \subseteq
 B_\epsilon \ \mbox{and}\ B \subseteq A_\epsilon\},
$$
where for any subset $X$ of $\R^2$, the set $X_{\epsilon}$ is
$$X_\epsilon := \bigcup_{x \in X} \{m \in \R^2\, \, | \,\, \|x - m \|
\leq \epsilon\}.$$ If $(A_{\hbar})_{\hbar \in I}$ and
$(B_{\hbar})_{\hbar \in I}$ are sequences of uniformly bounded subsets
of $\mathbb{R}^2$, we say that $$A_{\hbar} = B_{\hbar} +
\mathcal{O}(\hbar^{N}) $$ if there exists a constant $C>0$ such that
$$
{\rm d}_H(A_{\hbar},\,B_{\hbar})\leq C\hbar^{N}$$ for all $\hbar \in
I$.  If $A$ or $B$ are not uniformly bounded, we shall say that
$A_{\hbar} = B_{\hbar} + \mathcal{O}(\hbar^{N}) $ \emph{on a ball $D$}
if there exists a sequence of sets $D_\h$, all diffeomorphic to $D$,
such that $D_\h = D + \mathcal{O}(\h^2)$ and
\[
A_\h \cap D = B_\h \cap D_\h + \mathcal{O}(\h^2).
\]

\begin{definition} \label{chart} Let $F:=(J,H) \colon M \to
  \mathbb{R}^2$ be an integrable system on a $4$\--dimensional
  connected symplectic manifold. Let $P$ and $Q$ be commuting
  semiclassical operators with principal symbols $J,H \colon M \to
  \mathbb{R}$. We say
  that $\op{JointSpec}(P,\,Q)$ \emph{satisfies the Bohr\--Sommerfeld
    rules} 
  if for every regular value $c$ of $F$ there exists a small ball
  ${\rm B}(c,\epsilon_c)$ centered at $c$, such that,
  \begin{equation} \op{JointSpec}(P,\,Q) = g_{\hbar}(2\pi \hbar
    \Z^2\cap D) + \mathcal{O}(\hbar^2) \quad \text{ on } {\rm
      B}(c,\epsilon_c),
    \label{eq:bs1} \end{equation}
  with $$g_{\hbar}=g_0+\hbar g_1,$$ where $g_0,g_1$ are smooth maps
  defined on a bounded open set $D\subset\R^2$, $g_0$ is a
  diffeomorphism into its image, $c \in g_0(D)$ and the components of
  $g_0^{-1}=(\mathcal{A}_1,\,\mathcal{A}_2)$ form a basis of action
  variables.
\end{definition}

\smallskip In this situation, if $\hbar$ is small enough, then
$g_{\hbar}$ is a diffeomorphism into its image, and its inverse admits an asymptotic expansion in non-negative powers of $\hbar$ for the $\mathcal{C}^{\infty}$ topology; we call
$(g_{\hbar})^{-1}$ an \emph{affine chart} for $\op{JointSpec}(P,\,Q)$.

Bohr\--Sommerfeld rules are known to hold for integrable systems of
pseudodifferential operators (thus $M$ is a cotangent bundle)
\cite{charbonnel,vungoc-focus}, or for integrable systems of Toeplitz
operators on prequantizable compact symplectic
manifolds~\cite{charles-quasimodes}. It would be interesting to
formalize the minimal semiclassical category where Bohr-Sommerfeld
rules are valid.

Note that action variables are not unique. Thus, if $(g_{\hbar})^{-1}$
is an affine chart for ${\rm JointSpec}(P,Q)$ and $B \in {\rm
  GL}(2,\Z)$ then $B \circ (g_{\hbar})^{-1}$ is again an affine chart.
In view of the discussion in Section~\ref{subsect:period}, this remark
implies the following proposition.
\begin{prop}
  If $F$ is a semitoric system, then in Definition~\ref{chart}, we can
  assume that $\mathcal{A}_1(c_1,c_2)=c_1$. Therefore, there exists an
  integer $k$ such that the actions $\mathcal{A}_1, \mathcal{A}_2$
  satisfy:
  \begin{equation} {\rm d}\mathcal{A}_1 = {\rm d}c_1, \quad {\rm
      d}\mathcal{A}_2 = (\zeta_1 + k) {\rm d}c_1 + \zeta_2 {\rm
      d}c_2, \label{eq:actions}
  \end{equation}
  where $\zeta_1,\zeta_2$ are defined in~\eqref{equ:zeta}.
\end{prop}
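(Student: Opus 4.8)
The plan is to combine two facts: the freedom to postcompose an affine chart with an element of $\mathrm{GL}(2,\Z)$, and the explicit description of a distinguished basis of the period lattice for a semitoric system given in Section~\ref{subsect:period}. First I would start from an arbitrary affine chart $(g_\hbar)^{-1}$ with $g_0^{-1} = (\mathcal{A}_1, \mathcal{A}_2)$ a basis of action variables near a regular value $c$; by definition the differentials $\mathrm{d}\mathcal{A}_1(c), \mathrm{d}\mathcal{A}_2(c)$ span the period lattice $\La_c^* \subset T_c^*\R^2$, in the sense that $\mathcal{X}_{\mathcal{A}_i}$ has $2\pi$-periodic flow and these two vectors generate all such. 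Since $J$ generates an effective Hamiltonian $S^1$-action, $(1,0)$ (i.e. $\mathrm{d}c_1$, the covector dual to $\mathcal{X}_J$ having $2\pi$-periodic flow) lies in the period lattice; moreover it is primitive there, because the action is effective so $\mathcal{X}_J$ has minimal period exactly $2\pi$ on the regular fiber. Hence $\mathrm{d}c_1$ can be completed to a lattice basis, and one can find $B \in \mathrm{GL}(2,\Z)$ such that $B \circ g_0^{-1}$ has first component equal to $c_1$; replacing $(g_\hbar)^{-1}$ by $B\circ (g_\hbar)^{-1}$ (still an affine chart by the remark preceding the proposition) we may assume $\mathcal{A}_1(c_1,c_2) = c_1$, i.e. $\mathrm{d}\mathcal{A}_1 = \mathrm{d}c_1$.

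Next I would pin down $\mathcal{A}_2$. We know $(\mathrm{d}\mathcal{A}_1, \mathrm{d}\mathcal{A}_2) = (\mathrm{d}c_1, \mathrm{d}\mathcal{A}_2)$ is a lattice basis, and from Section~\ref{subsect:period} the pair $(\zeta_1 \,\mathrm{d}c_1 + \zeta_2\, \mathrm{d}c_2, \ \mathrm{d}c_2)$ — wait, more precisely $(\zeta_1(c),\zeta_2(c))$ together with $(0,1)$ form a lattice basis, meaning the covectors $\zeta_1\,\mathrm{d}c_1 + \zeta_2\,\mathrm{d}c_2$ and $\mathrm{d}c_2$... I should instead say: $(1,0)$ and $(\zeta_1,\zeta_2)$ form a basis of the period lattice. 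So $(\mathrm{d}c_1, \ \zeta_1\,\mathrm{d}c_1 + \zeta_2\,\mathrm{d}c_2)$ is a lattice basis. Any other lattice basis with first vector $\mathrm{d}c_1$ differs from this one by a matrix in $\mathrm{GL}(2,\Z)$ fixing the first basis vector, i.e. by $\begin{pmatrix} 1 & k \\ 0 & \pm 1 \end{pmatrix}$ for some $k \in \Z$. Since both $\mathcal{A}_2$ and the second period-lattice generator can be chosen so that the corresponding periodic flow has the same orientation (the sign $\zeta_2 > 0$ is built into the construction of $\tau_2$ in Section~\ref{subsect:period}, and one is free to flip the sign of $\mathcal{A}_2$), the $\pm 1$ is a $+1$, and we conclude $\mathrm{d}\mathcal{A}_2 = (\zeta_1 + k)\,\mathrm{d}c_1 + \zeta_2\,\mathrm{d}c_2$.

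The main obstacle I anticipate is not the linear algebra but making precise the identification between ``components of $g_0^{-1}$ are action variables'' and ``their differentials generate the period lattice'': one must invoke the action–angle theorem to see that for a basis of actions $(\mathcal{A}_1,\mathcal{A}_2)$, the Hamiltonian vector fields $\mathcal{X}_{\mathcal{A}_i}$ have $2\pi$-periodic flows and, conversely, that every lattice vector arises this way, so that $(\mathrm{d}\mathcal{A}_1, \mathrm{d}\mathcal{A}_2)$ really is a $\Z$-basis of $\La_c^*$ and not merely a sublattice basis. Granting that (it is standard, e.g. via \cite{duistermaat}), together with primitivity of $(1,0)$ coming from effectiveness of the $S^1$-action in (H.\ref{Hii}), and with the normalization $\zeta_2 > 0$ from~\eqref{equ:zeta}, the claimed identity~\eqref{eq:actions} follows, with the integer $k$ recording precisely the residual freedom in completing $\mathrm{d}c_1$ to a lattice basis — which is exactly the ambiguity that the twisting index is later designed to control.
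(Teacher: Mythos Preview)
Your argument is correct and is exactly the approach the paper intends: the paper does not give a separate proof but simply states that the proposition follows from the $\mathrm{GL}(2,\Z)$-freedom of affine charts combined with the period-lattice description in Section~\ref{subsect:period}, and you have spelled out precisely that deduction. Your observations that effectiveness of the $S^1$-action makes $(1,0)$ primitive, and that one may absorb the residual $\pm 1$ by flipping the sign of $\mathcal{A}_2$, are the right details to fill in.
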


\section{Main Result}
We state in this section a more precise version of our main result,
Theorem~\ref{thm:main}.  Let $\mathcal{M}_{\textup{ST}}$ be the set of
semitoric systems (\emph{i.e.} triples $(M,\omega,F)$ satisfying
Definition~\ref{def:semitoric}) modulo isomorphisms (as defined in
Definition~\ref{def:iso}).

For each of the four invariants (1), (2), (3), or (4) mentioned in
Section~\ref{sec:introduction}, we may define a map $\mathcal{I}_j$,
$j=1,2,3,4$, from $\mathcal{M}_{\textup{ST}}$ with value in the
appropriate space corresponding to the invariant (we refer
to~\cite{PeVN2009} for these spaces; here we simply denote them by
$\mathcal{B}_j$, $j=1,2,3,4$, as their precise definition is not
important for our purpose).

Let $\mathcal{Q}_{\textup{ST}}$ be the set of all quantum \emph{simple} semitoric
systems \emph{for which the Bohr\--Sommerfeld rules hold}, equipped with the natural arrow 
$$\sigma:
\mathcal{Q}_{\textup{ST}}\to\mathcal{M}_{\textup{ST}}$$ induced by the
principal symbol map. We introduce now the \emph{joint spectrum map}
\begin{align*}
  \textup{JS} : \mathcal{Q}_{\textup{ST}} & \longrightarrow \mathcal{P}(\R^2)^I\\
  (P,Q) & \longmapsto \textup{JointSpec}(P,Q),
\end{align*}
where we recall that $I$ is the set where the semiclassical parameter
$\h$ varies. Let us denote by $\mathcal{P}_2$ the set of equivalence
classes of $\h$-dependent subsets of $\R^2$ with respect to the
equality modulo $\mathcal{O}(\h^2)$ on every ball, and
$\overline{\textup{JS}}: \mathcal{Q}_{\textup{ST}} \to \mathcal{P}_2$
the quotient map of $\textup{JS}$. Let $\Sigma\subset\mathcal{P}_2$ be
the range of $\overline{\textup{JS}}$, \emph{i.e.} the subset of all
joint spectra of semitoric systems, modulo $\mathcal{O}(\h^2)$. Then
Theorem~\ref{thm:main} can be rephrased as follows:
\begin{theorem}
  \label{thm:main-precise}
  For each $j=1,2,3,4$, there exists a map $\hat{\mathcal{I}}_j :
  \Sigma \to \mathcal{B}_j$ such that the following diagram is
  commutative:
  \[
  \xymatrix{ \mathcal{Q}_{\textup{ST}} \ar[r]^{\overline{\textup{JS}}}
    \ar[d]^{\sigma} &
    \Sigma \ar@{=>}[d]^{\hat{\mathcal{I}}_j}\\
    \mathcal{M}_{\textup{ST}} \ar[r]^{\mathcal{I}_j} & \mathcal{B}_j }
  \]
\end{theorem}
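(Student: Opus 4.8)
The plan is to reconstruct each of the four invariants from the data $\overline{\textup{JS}}(P,Q)\in\Sigma$ by exploiting the Bohr--Sommerfeld rules and the fact that, on any ball around a regular value, the joint spectrum is (up to $\mathcal{O}(\hbar^2)$) the image of the lattice $2\pi\hbar\Z^2$ under an affine chart $g_\hbar^{-1}$. The first step is to observe that the equivalence class in $\mathcal{P}_2$ determines, for every regular value $c$, the affine chart $g_0^{-1}=(\mathcal{A}_1,\mathcal{A}_2)$ up to the residual ambiguity: a global shift by $2\pi\hbar\Z^2$, an element of $\agl(2,\Z)$, and $\mathcal{O}(\hbar^2)$ corrections. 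Concretely, the density of spectral points in a small ball recovers the Jacobian $|\det \DD g_0|$, and comparing overlapping balls recovers the transition maps, which lie in $\agl(2,\Z)$; hence the integral affine structure carried by $\op{JointSpec}(P,Q)$ on the set of regular values is well defined. Using the normalization coming from the $S^1$-action (the Proposition following Definition~\ref{chart}, i.e.\ $\mathcal{A}_1=c_1$ and \eqref{eq:actions}), one pins down the charts further so that only the integer $k$ and a global translation remain free.

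The second step is to locate the focus-focus values inside $F(M)$ and count them, giving invariant (1). By Proposition~\ref{prop:joint-spectrum-image} the closure of the spectrum fills out $F(M)$, so the topological boundary of the spectrum recovers $\partial F(M)$; the focus-focus values are the finitely many points in the interior at which the integral affine structure reconstructed in Step 1 fails to extend, detectable as the points around which the holonomy of the affine structure is nontrivial (conjugate to the standard focus-focus monodromy matrix $\left(\begin{smallmatrix}1&0\\1&1\end{smallmatrix}\right)$). Simplicity of the system guarantees these are isolated and that each lies on its own fibre of $J$, so they are unambiguously identified from the spectral data. Once the focus-focus values $c^{(1)},\dots,c^{(m_f)}$ are known, invariant (4), the polygonal invariant, is obtained exactly as in the symplectic classification: choose cuts from each $c^{(i)}$, develop the singular affine structure of $\op{JointSpec}(P,Q)$ into $\R^2$ using the charts from Step 1, and read off the resulting polygon(s); the family of polygons (indexed by choices of cut directions and the $\agl(2,\Z)$ action) is precisely $\mathcal{I}_4$ of the underlying system, because the affine structure reconstructed spectrally agrees with the one coming from the action variables of $F$.

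The third step handles invariants (2) and (3), which are local at each focus-focus value. For the Taylor series invariant (2) I would invoke the semiglobal result of \cite{PeVN2013}: near a focus-focus value the joint spectrum modulo $\mathcal{O}(\hbar^2)$ determines the singular Bohr--Sommerfeld constants, and from these the coefficients $a_{ij}$ of $\sum a_{ij}X^iY^j$ are extracted (the linear part $a_{10},a_{01}$ from the logarithmic spacing of eigenvalues approaching the singular fibre, the higher coefficients from finer asymptotics of the lattice deformation). For the height invariant (3), which measures the symplectic volume of the reduced space below the focus-focus value, I would compute it as the difference of the action $\mathcal{A}_2$ evaluated along the critical fibre from the two sides — equivalently, the jump in the period $\zeta_2$ integrated across the focus-focus line — a quantity already encoded in the affine structure from Step 1 together with the local model from \cite{PeVN2013}; $h$ equals $2\pi$ times a length read off the developed polygon of Step 2, so it is determined by $\overline{\textup{JS}}(P,Q)$. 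Finally, I must check that each $\hat{\mathcal{I}}_j$ is well defined on $\Sigma$, i.e.\ independent of the choice of $(P,Q)$ with the given spectrum class: this is immediate since every construction above used only the $\mathcal{P}_2$-class, and that the diagram commutes, i.e.\ $\hat{\mathcal{I}}_j\circ\overline{\textup{JS}}=\mathcal{I}_j\circ\sigma$, which holds because the spectrally reconstructed affine structure and local invariants coincide with the classical ones computed directly from $F=\sigma(P,Q)$ via the action-angle theorem and \cite{VN2003,PeVN2009}.

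I expect the main obstacle to be Step 1 combined with the local analysis near focus-focus points: extracting the $\mathcal{O}(\hbar)$ term $g_1$ and the singular Bohr--Sommerfeld data cleanly from an $\mathcal{O}(\hbar^2)$-defined set of points requires care (the subprincipal corrections are exactly at the edge of visibility), and matching the spectrally defined affine charts across the cut at a focus-focus value — so that the developed polygon is genuinely the symplectic invariant and not merely a combinatorial artefact of the lattice — is the delicate point where the simplicity hypothesis and the precise form \eqref{eq:actions} must be used.
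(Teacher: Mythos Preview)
Your plan is broadly parallel to the paper's, and the high-level decomposition (recover $F(M)$ and the integral affine structure on $B_r$; locate the focus-focus values; read off Taylor series via \cite{PeVN2013}; develop the polygon; read the height off the polygon) is correct. However, the two proofs differ substantially at the technical core, and your Step~1 as written has a real gap.

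\textbf{Recovering the affine structure.} You propose to read the affine chart from ``density of spectral points'' (which only gives $|\det \DD g_0|$, not $\DD g_0$) and ``comparing overlapping balls''. Neither of these recovers the actual periods $(\tau_1,\tau_2)$, and the second is circular: you cannot compare charts until you have constructed them. The paper instead applies the semiclassical Fourier transform to the spectral counting measure $D(\lambda,\hbar)=\sum_{c}\chi(c)\delta_c$, uses Poisson summation on the Bohr--Sommerfeld lattice, and identifies the semiclassical wavefront set of the resulting $Z(t,\hbar)$ with the period bundle over $K$. This is the main technical content of the proof and is what makes the extraction of $(\tau_1,\tau_2)$ from the $\mathcal{O}(\hbar^2)$-class rigorous; your plan lacks an analogue of this step.

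\textbf{Locating focus-focus values.} Your idea (detect them as points with nontrivial affine monodromy) is a legitimate alternative to the paper's method, which instead detects them as the interior points where $\tau_2$ has a logarithmic singularity (growing circles from a boundary point until $\tau_2$ fails to extend). The monodromy route would work once you actually have the periods on $B_r$, so this difference is secondary to the gap above.

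\textbf{Height invariant.} Your description is off: the height is not a ``jump in $\mathcal{A}_2$ across the critical fibre'' nor a jump in $\zeta_2$. It is the ordinate (in the developed polygon $\Delta$) of the image of the focus-focus value minus the ordinate of the boundary point directly below it. The paper obtains it by taking a vertical sequence $Y_n\to C_i$ from below and computing $\lim_n\Phi(Y_n)$; there is no factor of $2\pi$.

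Your anticipated obstacle is the right one, but note that recovering the \emph{principal} affine structure does not require extracting $g_1$; the wavefront-set computation in the paper uses only the leading phase $g_0$, and the $\mathcal{O}(\hbar^2)$ equivalence is strong enough because the period bundle sits in the $t$-variable, not in the amplitude.
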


\begin{cor}
  If two quantum simple semitoric systems for which the Bohr\--Sommerfeld rules hold 
  have the same joint spectrum
  modulo $\mathcal{O}(\h^2)$, then the underlying classical systems
  have the same set of invariants {\rm (1)}, {\rm (2)}, {\rm (3)},
  {\rm (4)}. In particular, if two quantum Jaynes\--Cummings type systems 
  for which the Bohr\--Sommerfeld rules hold have the same joint spectrum
  modulo $\mathcal{O}(\h^2)$, then the underlying classical systems are isomorphic.
\end{cor}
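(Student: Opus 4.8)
The plan is to read off both assertions directly from the precise main theorem, Theorem~\ref{thm:main-precise}, combined with the symplectic classification of semitoric systems of Pelayo--V\~u Ng\d oc~\cite{PeVN2009,PeVN2011}; no new analysis should be needed.

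For the first assertion, let $(P,Q)$ and $(P',Q')$ be quantum simple semitoric systems satisfying the Bohr--Sommerfeld rules and having the same joint spectrum modulo $\mathcal{O}(\h^2)$. Since the joint spectra need not be uniformly bounded, I read ``modulo $\mathcal{O}(\h^2)$'' in the ``on every ball'' sense of the definition preceding Definition~\ref{chart}; then, by the very construction of the quotient map $\overline{\textup{JS}}$ and of $\mathcal{P}_2$, the hypothesis says exactly that $\overline{\textup{JS}}(P,Q)=\overline{\textup{JS}}(P',Q')$ in $\Sigma$, say equal to some $s$. Applying the commutativity of the diagram in Theorem~\ref{thm:main-precise} to each of the two systems, for every $j\in\{1,2,3,4\}$ one obtains
\[
\mathcal{I}_j\bigl(\sigma(P,Q)\bigr)=\hat{\mathcal{I}}_j(s)=\mathcal{I}_j\bigl(\sigma(P',Q')\bigr),
\]
and since $\mathcal{I}_1,\dots,\mathcal{I}_4$ are by definition the number of focus--focus values, the collection of Taylor series, the height invariant and the polygonal invariant, this is precisely the claim that the underlying classical systems carry the same invariants (1)--(4).

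For the second assertion, if $(P,Q)$ and $(P',Q')$ are of Jaynes--Cummings type then $\sigma(P,Q)$ and $\sigma(P',Q')$ are, by definition, classical systems of Jaynes--Cummings type, hence semitoric systems with a single focus--focus point, which are in particular simple; so the first assertion applies and they share the invariants (1)--(4). To upgrade this to an isomorphism I would invoke the classification: a semitoric system is determined up to isomorphism by the invariants (1)--(4) together with the twisting index. For a system with exactly one focus--focus value the twisting index invariant is trivial --- it amounts to a single integer, attached to that focus--focus point and well defined only up to a global shift, hence normalizable to a fixed value, so the space of twisting indices is a single point --- and therefore the twisting indices of $\sigma(P,Q)$ and $\sigma(P',Q')$ automatically coincide. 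Thus all five invariants agree and the two systems are isomorphic as semitoric integrable systems; equivalently, one applies the corollary stated in Section~\ref{sec:introduction}, whose additional hypothesis of equal twisting indices is vacuous in the Jaynes--Cummings case.

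The argument is essentially formal once Theorem~\ref{thm:main-precise} is available, so I do not expect a serious obstacle. The one step requiring more than bookkeeping is the claim that the twisting index of a system of Jaynes--Cummings type is trivial: this means recalling the definition of that invariant from~\cite{PeVN2009} and checking that a single integer taken modulo a global shift represents a unique class --- a verification rather than a new idea. The remaining subtlety is purely notational, namely to make sure that ``equality modulo $\mathcal{O}(\h^2)$'' in the statement is read in the ``on every ball'' sense, so that it matches the equivalence relation used to define $\mathcal{P}_2$ and $\overline{\textup{JS}}$.
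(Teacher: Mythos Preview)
Your argument is correct and matches the paper's intent: the corollary is stated without an explicit proof, as it is meant to follow immediately from Theorem~\ref{thm:main-precise} together with the classification in~\cite{PeVN2009,PeVN2011}, and you have correctly spelled out those formal steps, including the key observation that the twisting index is vacuous for a single focus--focus point (the paper describes it as ``an index associated with each \emph{pair} of focus--focus singularities,'' which is another way of seeing its triviality here).
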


\section{Proof of Theorem~\ref{thm:main-precise}}

Let $P,Q$ be a quantum simple semitoric system with joint principal symbol $F=(J,H)$. Remember that we want to prove that the knowledge of the joint spectrum of $P,Q$ modulo $\mathcal{O}(\hbar^2)$ allows to recover invariants $(1)$ to $(4)$. For the sake of clarity, we divide the proof into five steps.
\\
\paragraph{\bf Step 1} We recover the image $F(M)$ thanks to
Proposition~\ref{prop:joint-spectrum-image}. Indeed, choose a point $E
= (E_1,E_2)$ in $\R^2$; assume that the following condition holds:
\begin{itemize}
\item[(C)] for every $\varepsilon > 0$ and for every $\hbar_0 \in I$,
  there exists $\hbar \leq \hbar_0$ in $I$ such that
  $\text{JointSpec}(P_{\hbar},Q_{\hbar}) \cap B(E,\varepsilon) \neq
  \emptyset$.
\end{itemize}
Then Proposition~\ref{prop:joint-spectrum-image} implies that $E$
belongs to $F(M)$.  Conversely, assume $E\in B_r$, where $B_r$ is the
set of regular values of $F$. Because of the Bohr-Sommerfeld rules,
there exists a small ball around $E$ in $\R^2$ in which the joint
spectrum is a deformation of the lattice $\h\Z^2$. Hence when $\h$ is
small enough, this ball always contains some element of the joint
spectrum (the number of joint eigenvalues grows like $\h^{-2}$), which
says that Condition~(C) holds. 
Let $S$ be the set of $E\in\R^2$ for which (C) holds. We have
\[
B_r \subset S \subset F(M).
\]
But we know from~\cite[Proposition $2.9$]{VN2007} that the closure of
$B_r$ equals $F(M)$. Therefore, $\overline{S}=F(M)$, which proves that
the image $F(M)$ can be recovered from the joint spectrum.

Note that this step would also work with a weaker hypothesis than the
Bohr-Sommerfeld rules. For instance, having a $\mathcal{C}^\infty_0$
functional calculus for the semiclassical operators, or being able to
construct microlocal quasimodes (which is common in pseudodifferential
or Toeplitz analysis) would be sufficient for recovering $F(M)$.
\\
\paragraph{\bf Step 2}
In this step, we show how to recover the periods of the classical
system at regular values from the knowledge of the joint spectrum. In
order to do so, we adapt an argument from~\cite{VN2011} for the
resolution of a similar inverse problem in dimension $2$. Although in
our case we are working in dimension $4$, which makes the study more
difficult, the situation is also simpler by some aspects, because we
know from \cite[Theorem $3.4$]{VN2007} that the regular fibers are
connected.
\ \\

Let $c_0$ be a regular value of $F$, and let $B$ be a ball centered at
$c_0$ in which the joint spectrum is described by the Bohr-Sommerfeld
rules (\ref{eq:bs1}). Let $D$ and $g_{\hbar}$ be as in the statement
of the latter. We can assume that $g_{\hbar}$ is a diffeomorphism from
$g_{\hbar}^{-1}(B)$ into $B$. We recall that
\[ \op{JointSpec}(P_{\hbar},\,Q_{\hbar}) \cap B = g_{\hbar}(2\pi \hbar
\Z^2 \cap D) \cap B_\h + \mathcal{O}(\hbar^2), \] where $B=B_\h +
\mathcal{O}(\h^2)$.
\ \\
Now, let $\chi$ be a non-negative smooth function with compact support
$K \subset B$, equal to $1$ on a compact subset of $B$. We consider
the spectral measure
\[ D(\lambda,\hbar) = \sum_{c \in \mathrm{JointSpec}(P,Q) \cap B}
\chi(c) \delta_c(\lambda) \] where $\delta_c$ is the Dirac
distribution at $c$. Let $\mathcal{F}_{\hbar}$ stand for the
semiclassical Fourier transform, so that
\[ \mathcal{F}_{\hbar}(f)(\xi) = \frac{1}{(2\pi \hbar)^2} \int_{\R^2}
\exp\left( -i \hbar^{-1} \langle x, \xi \rangle \right) f(x) {\rm
  d}x, \] for smooth, compactly supported functions $f$, and introduce
\[ Z(t,\hbar) = (2\pi \hbar)^2 \mathcal{F}_{\hbar}(D(\cdot,\hbar))(t)
= \sum_{c \in \mathrm{JointSpec}(P,Q) \cap B} \chi(c) \exp\left( -i
  \hbar^{-1} \langle c, t \rangle \right). \] Thanks to the
Bohr-Sommerfeld conditions, we may estimate this quantity as
\[ Z(t,\hbar) = \sum_{s\in 2\pi\hbar\Z^2 \cap D}
\varphi_t(g_{\hbar}(s),\hbar) + \mathcal{O}(\hbar) \] with
\[ \varphi_t(s,\hbar) = \chi(g_{\hbar}(s)) \exp\left( -i \hbar^{-1}
  \langle g_{\hbar}(s), t \rangle \right).  \] Because
$\chi(g_{\hbar}(s)) = 0$ if $s \notin D$, this yields
\[ Z(t,\hbar) = \sum_{\alpha \in \Z^2} \varphi_t(g_{\hbar}(2\pi \hbar
\alpha),\hbar) + \mathcal{O}(\hbar). \] By the Poisson summation
formula, we thus obtain
\[ Z(t,\hbar) = \sum_{\beta \in \Z^2} Z_{\beta}(t,\hbar) +
\mathcal{O}(\hbar) \] with
\[ Z_{\beta}(t,\hbar) = \frac{1}{(2\pi \hbar)^2} \int_{\R^2}
\exp\left( -i \hbar^{-1} \left( \langle \beta, s \rangle + \langle
    g_{\hbar}(s), t \rangle \right) \right) \chi(g_{\hbar}(s)) {\rm
  d}s. \] Since $g_{\hbar}$ is a diffeomorphism from
$g_{\hbar}^{-1}(B)$ into $B$, we can use the change of variables $c =
g_{\hbar}(s)$, $s = f_{\hbar}(c)$, which yields:
\[ Z_{\beta}(t,\hbar) = \frac{1}{(2\pi \hbar)^2} \int_{\R^2}
\exp\left( -i \hbar^{-1} \left( \langle \beta, f_{\hbar}(c) \rangle +
    \langle c, t \rangle \right)\right) \chi(c) |\det
J_{f_{\hbar}}(c)| {\rm d}c, \] which means that $Z_{\beta}(t,\hbar) =
\mathcal{F}_{\hbar}(\psi_{\beta})(t)$ where
\[ \psi_{\beta}(c) = \exp\left( -i\hbar^{-1} \langle
  \beta,f_{\hbar}(c) \rangle \right) \chi(c) |\det
J_{f_{\hbar}}(c)| \] is a WKB function with phase
\[ \theta_{\beta}(c) = - \langle \beta,f_0(c) \rangle = - \beta_1
\mathcal{A}_1(c) - \beta_2 \mathcal{A}_2(c). \] Since by equation
(\ref{eq:actions})
\[ \nabla \theta_{\beta}(c) = -\begin{pmatrix} \beta_1 + \beta_2(
  \zeta_1(c) + k) \\ \beta_2 \zeta_2(c) \end{pmatrix}, \] the
associated Lagrangian submanifold is the set
\begin{equation} \left\{ (c,t) \in \R^4 \, |  \quad (t_1,t_2) = - (\beta_1
    + \beta_2( \zeta_1(c) + k), \beta_2 \zeta_2(c))
  \right\}. \label{eq:lag}\end{equation} One can easily check that
this submanifold is indeed Lagragian: the $1$-form $\nu = \left(
  \beta_1 + \beta_2( \zeta_1(c) + k ) \right) {\rm d}c_1 + \beta_2
\zeta_2(c) {\rm d}c_2$ is closed, as
\[ \nu = d(\beta_1 \mathcal{A}_1 + \beta_2 \mathcal{A}_2). \]
Since the Jacobian $|\det J_{f_{\hbar}}(c)|$ does not vanish in the
support $K$ of $\chi$, this implies that the semiclassical wavefront
set of $Z_{\beta}(\cdot,\hbar)$ is
\begin{align*}
  \mathrm{WF}_{\hbar}(Z_{\beta}(\cdot,\hbar)) = & \left\{ (c,t) \in
    \R^4 \, | ~ (t_1,t_2) = - (\beta_1 + \beta_2( \zeta_1(c) + k), \beta_2
    \zeta_2(c)), \ c\in K \right\} \\ = & \, \mathcal{L}_{\beta}(K)
\end{align*}

\ \\
To obtain a similar result on $Z(\cdot,\hbar)$, we still need to sum
over $\beta \in \Z^2$. Let $t^0 = (t_1^0, t_2^0) \in (\R^*)^2$,
$\varepsilon > 0$, and let $\rho \in
\mathcal{C}^{\infty}_0(B(t_0,\varepsilon))$.
\begin{lemma}
  If there exists a solution $(c,t)$ of \eqref{eq:lag} with $t$ in the
  support of $\rho$, then $\beta$ is such that
  \[ \max(|\beta_1|,|\beta_2|) \leq M \] where $M$ is defined as
  \[ M = \frac{\varepsilon + \|t^0\|}{\min_K |\zeta_2|} \max \left(1,
    \min_K |\zeta_2| + |k| + \max_{K} |\zeta_{1}| \right). \]
\end{lemma}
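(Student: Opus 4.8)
The plan is to extract from the definition \eqref{eq:lag} of the Lagrangian the two scalar identities it encodes and to solve them successively for $\beta_2$ and then $\beta_1$. The relevant points $(c,t)$ are those lying in $\mathrm{WF}_{\h}(Z_\beta(\cdot,\h))=\mathcal{L}_\beta(K)$, so one may assume $c\in K$; moreover, since $\rho$ is supported in $B(t^0,\varepsilon)$, every admissible $t$ satisfies $\|t\|\leq \|t^0\|+\varepsilon$, hence both $|t_1|\leq \|t^0\|+\varepsilon$ and $|t_2|\leq \|t^0\|+\varepsilon$.

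First I would use the second component, $t_2=-\beta_2\,\zeta_2(c)$. Because $K$ is compact and contained in the set of regular values, and $\zeta_2=\tau_2/(2\pi)$ is continuous and strictly positive there (by the very definition of the first-return time $\tau_2(c)>0$ in Section~\ref{subsect:period}), the number $\min_K|\zeta_2|$ is positive and
\[
|\beta_2|=\frac{|t_2|}{|\zeta_2(c)|}\leq \frac{\|t^0\|+\varepsilon}{\min_K|\zeta_2|};
\]
since the second factor in the definition of $M$ is $\geq 1$, this already yields $|\beta_2|\leq M$. Then I would feed this into the first component, $t_1=-\bigl(\beta_1+\beta_2(\zeta_1(c)+k)\bigr)$, to obtain
\[
|\beta_1|\leq |t_1|+|\beta_2|\bigl(|\zeta_1(c)|+|k|\bigr)\leq (\|t^0\|+\varepsilon)+\frac{\|t^0\|+\varepsilon}{\min_K|\zeta_2|}\bigl(\max_K|\zeta_1|+|k|\bigr).
\]
Factoring out $(\|t^0\|+\varepsilon)/\min_K|\zeta_2|$ turns the right-hand side into exactly $\frac{\|t^0\|+\varepsilon}{\min_K|\zeta_2|}\bigl(\min_K|\zeta_2|+|k|+\max_K|\zeta_1|\bigr)\leq M$, so $|\beta_1|\leq M$ as well, and combining the two estimates gives $\max(|\beta_1|,|\beta_2|)\leq M$.

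There is no real obstacle here: the bound follows directly from the explicit affine-in-$\beta$ form of \eqref{eq:lag} together with the compactness of $K$. The only point worth stating carefully is the non-degeneracy $\min_K|\zeta_2|>0$, which is what allows dividing by $\zeta_2(c)$ when solving for $\beta_2$; this is immediate since $K$ lies in the regular set and the first-return time is positive and continuous there. (Equivalently, the conclusion may be read as: only the finitely many $\beta$ with $\max(|\beta_1|,|\beta_2|)\leq M$ can contribute to $\mathrm{WF}_{\h}(Z(\cdot,\h))$ above $B(t^0,\varepsilon)$, which is the form in which the lemma will be used.)
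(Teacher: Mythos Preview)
Your proof is correct and follows essentially the same approach as the paper: bound $|\beta_2|$ from the second component $t_2=-\beta_2\zeta_2(c)$ using $\min_K|\zeta_2|>0$, then bound $|\beta_1|$ from the first component using the bound on $|\beta_2|$. The only cosmetic difference is that the paper starts from the single inequality $\|t\|^2\leq(\varepsilon+\|t^0\|)^2$ rather than the two componentwise bounds $|t_i|\leq \varepsilon+\|t^0\|$, but this leads to the same estimates; your added justification of $\min_K|\zeta_2|>0$ is a welcome clarification.
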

\begin{proof}
  For such a solution, we have $\|t\| \leq \varepsilon + \|t^0\|$,
  thus
  \[ \left( \beta_1 + \beta_2( \zeta_1(c) + k) \right)^2 + \beta_2^2
  \zeta_2(c)^2 \leq (\varepsilon + \|t^0\|)^2, \] which implies that
  \begin{equation} |\beta_2| \leq \frac{\varepsilon + \|t^0\|}{\min_K
      |\zeta_2|}. \label{eq:maj1}\end{equation} Since we also have
  \[ | \beta_1 + \beta_2( \zeta_1(c) + k) | \leq \varepsilon + \|t^0\|
  , \] we deduce from the previous inequality that
  \begin{equation} | \beta_1 | \leq (\varepsilon + \|t^0\|) \left( 1 +
      \frac{|k| + \max_K |\zeta_1|}{\min_K |\zeta_2|}
    \right), \label{eq:maj2}\end{equation} which proves the result.
\end{proof}
Using the proof of the non-stationary phase lemma, we can write for
such a $\beta$ and any $N \geq 1$
\[ (2\pi \hbar)^2 Z_{\beta}(t,\hbar)\] equals
\[ \left(\frac{i\hbar}{\max(|\beta_1|,|\beta_2|)}\right)^N \int_{\R^2}
\exp\left( -i\hbar^{-1} (\langle \beta, f_{\hbar}(c) \rangle + \langle
  c, t \rangle) \right) L^N(a(c,\hbar)) {\rm d}c, \] where
$a(\cdot,\hbar)$ is compactly supported and admits an asymptotic
expansion in non-negative powers of $\hbar$ in the
$\mathcal{C}^{\infty}$ topology, and $L$ is the differential operator
defined as
\[ Lu = \nabla\left( \max(|\beta_1|,|\beta_2|) \frac{u}{|V|^2} V
\right) \] with
\[ V(c) = - \begin{pmatrix} t_1 + \beta_1 + \beta_2( \zeta_1(c) + k)
  \\ t_2 + \beta_2 \zeta_2(c) \end{pmatrix}. \] Introduce the function
$b = (\max(|\beta_1|,|\beta_2|)/|V|^2)V$; one has
\[ |b(c)| = \left( \left( \frac{t_1 + \beta_1 + \beta_2( \zeta_1(c) +
      k)}{\max(|\beta_1|,|\beta_2|)} \right)^2 + \left( \frac{t_2 +
      \beta_2 \zeta_2(c)}{\max(|\beta_1|,|\beta_2| )} \right)^2
\right)^{-1/2}. \] Then $b$ is uniformly bounded on $K$ for $\beta$
such that $\max(|\beta_1|,|\beta_2|) > M$ and, for every $\ell \in
\mathbb{N}^2$, there exists a constant $C_{\ell}$ such that
\[ |\partial_{c^\ell} b| =
|\partial_{{c_1}^{\ell_1}} \partial_{{c_2}^{\ell_2}} b| \leq
C_{\ell} \]
on $K$. Consequently, there exists a constant $\tilde{C}_N > 0$ such
that
\[ |\rho(t) Z_{\beta}(t,\hbar)| \leq \tilde{C}_N
\left(\frac{\hbar}{\max(|\beta_1|,|\beta_2|)}\right)^N \] when
$\max(|\beta_1|,|\beta_2|) > M$. Therefore, for $N \geq 4$, we have
\[ \sum_{\substack{\beta \in \Z^2 \\ \max(|\beta_1|,|\beta_2|) > M }}
|\rho(t) Z_{\beta}(t,\hbar)| \leq \hat{C}_N \hbar^N \] for some
constant $\hat{C}_N > 0$. This shows that only a finite number of
terms contribute to $\rho(t) Z(t,\hbar)$ up to
$\mathcal{O}(\hbar^{\infty})$, hence
\begin{align*}
  \mathrm{WF}_{\hbar}(\rho Z(\cdot,\hbar)) \subset & \left\{ (c_1,c_2,-
    \beta_1 - \beta_2( \zeta_1(c) + k) , -\beta_2 \zeta_2(c)) \in \R^4
    \right. |\\ ~ &  \,\,\, \left.(c_1,c_2) \in K, \max(|\beta_1|,|\beta_2|) > M
  \right\} 
\end{align*}
and finally
\begin{align*}
  \mathrm{WF}_{\hbar}(Z(\cdot,\hbar)) = & \left\{ (c_1,c_2,- \beta_1 -
    \beta_2( \zeta_1(c) + k) , -\beta_2 \zeta_2(c)) \in \R^4 \right. | \\
  ~&  \,\,\,  \left.(c_1,c_2) \in K, \beta \in \Z^2 \right\}\\
  = & \, \mathcal{L}(K),
\end{align*}
which is exactly the restriction of the period bundle over $K$ (see Section \ref{subsect:period}).\\
\ \\
The last part of this step is to explain how one can extract the
functions $(\tau_1,\tau_2)$ from the data of $\mathcal{L}(K) =
\bigcup_{\beta \in \Z^2}\mathcal{L}_{\beta}(K)$, which is the disjoint
union of smooth surfaces in $\R^4$. Endow $\R^4$ with the coordinates
$(x_1,x_2,x_3,x_4)$, and introduce the plane $\Pi = \{ x \in \R^4\, | \
x_1 = c^0_1, \ x_2 = c^0_2 \}$, for a fixed $c^0 \in K$.  Then the set
\[ \mathcal{E} = \mathcal{L}(K) \cap \Pi = \{ (c_1^0,c_2^0,- \beta_1 -
\beta_2( \zeta_1(c^0) + k) , -\beta_2 \zeta_2(c^0))\, | \ \beta \in \Z^2
\} \] is discrete, and the set $\{ x_4 \, |  \ x \in \mathcal{E}\} \cap
\R^{*}_{+}$ is bounded from below. Let $\mathcal{F}$ be the set of
points in $\mathcal{E}$ with minimal coordinate $x_4$; then
\[ 
\mathcal{F} = \{ (c_1^0,c_2^0,\zeta_1(c^0) + k -
\beta_1,\zeta_2(c^0))\, | \ \beta_1 \in \Z \}. 
\] Again, the set $\{ x_3
\, |\,\, x \in \mathcal{F}\} \cap \R^{*}_{+}$ is bounded from below,
and the point of this set with minimal coordinate $x_{3}$ is
$(c_1^0,c_2^0,\zeta_1(c^0),\zeta_2(c^0))$. The connected component of
this point in $\mathcal{L}(K)$ is the graph of the function 
\[ c \in K \mapsto (\zeta_{1}(c),\zeta_{2}(c)) = \frac{1}{2\pi} (\tau_1(c),\tau_2(c)). \]

\ \\
\paragraph{\bf Step 3} 
Let us now explain how to recover the position of the focus-focus
values from the joint spectrum. Thanks to step $1$, we know $F(M)$. By
\cite[Theorem $3.4$]{VN2007}, we know that the boundary of $F(M)$
consists of the singularities of elliptic-elliptic and transversally
elliptic type, and that the only singular values in the interior of
$F(M)$ are the images of the focus-focus singularities.  Let $A$ be any
point lying on the boundary $\partial F(M)$. Let $C_{1}, \ldots,
C_{m_{f}}$ be the images of the focus-focus points in $F(M)$, labelled
in such a way that
\[ J(m_{1}) < J(m_{2}) < \ldots < J(m_{m_{f}}),\] where for $i$ in $\{
1, \ldots, m_{f} \}$, $m_{i}$ is the only focus-focus-point in
$F^{-1}(C_{i})$.  Consider the distance $d = \min_{1 \leq i \leq m_f} \| A - C_{i} \|$ and let $j \in \{1,\ldots,m_f\}$ be such that $d = \| A - C_{j} \|$; since $C_{j}$ lies in the interior of $F(M)$, we have that $d >
0$. Let $B_r$ be the set of regular values of $F$; for every
$\varepsilon$ in $(0,d]$, the intersection
\[ X_{\varepsilon} = B(A,\varepsilon) \cap \mathring{F(M)} \] of the
ball of radius $\varepsilon$ centered at $A$ with the interior of
$F(M)$ is contained in $B_r$. Thus, from step $2$, we can compute the
function ${\tau_2}_{|X_{\varepsilon}}$ from the joint spectrum. It
follows from \cite[proposition $3.1$]{VN2003} that $\tau_2$ has a
logarithmic behavior near $C_{j}$. Hence, if
${\tau_2}_{|X_{\varepsilon}}$ can be extended to a continuous function
on $\bar{B}(A,\varepsilon) \cap \mathring{F(M)}$, then necessarily
$\varepsilon < d$. This allows to find $d$; the point $C_{j}$ belongs
to the circle $\mathcal{C}$ of radius $d$ centered at
$A$. Furthermore, the only points in $\mathcal{C} \cap \mathring{F(M)}$
where $\tau_{2}$ admits a logarithmic singularity are some of the $C_i$ (including $C_j$), that we
recover this way.

We obtain the positions of the other focus-focus values by applying
this method recursively. For instance, we recover another point $C_{k}$ by
considering circles of growing radius centered at $C_{j}$, and so on (let us recall that $m_f$ is finite).
\ \\
\paragraph{\bf Step 4}
Since we now know precisely the position of the focus-focus values,
\cite[Theorem~3.3]{PeVN2013} implies that the Taylor series invariant
associated with each focus-focus singularity can be recovered from the
joint spectrum.

\ \\
\paragraph{\bf Step 5}
In this step, we prove that from the data of the joint spectrum, one
can deduce the polygonal invariant introduced in \cite{VN2007}.

Recall that a map $U \subset \R^n \to V \subset \R^n$ is
\emph{integral affine} on $U$ if it is of the form $x \in U \mapsto Ax
+ b$, where $A \in \text{GL}(n,\Z)$ and $b \in \R^n$. An
\emph{integral affine structure} on a smooth $n$-dimensional manifold
is the data of an atlas $(U_{i},\varphi_{i})$ such that for all $i$,
the transition function $\varphi_{i} \circ \varphi_{j}^{-1}$ is
integral affine. 
 
As a consequence of the action-angle theorem, the integrable system
$(J,H)$ induces an integral affine structure on the set $B_{r}$ of
regular values of $F$. The charts are action variables, that is maps
$\varphi: U \to \R^2$ where $U$ is a small open subset of $B_{r}$ and
$\varphi \circ F$ generates a $\mathbb{T}^2$-action.

Let $\varepsilon_{i} \in \{-1,1\}$ and let
$\ell_{i}^{\varepsilon_{i}}$ be the vertical segment starting at the
focus-focus value $C_{i}$, going upwards (respectively downwards) if
$\varepsilon_{i} = 1$ (respectively $\varepsilon_{i} = -1$), and
ending at the boundary of $F(M)$. Set $ \ell^{\vec{\varepsilon}} =
\bigcup_{i} \ell_{i}^{\varepsilon_{i}}$.

\begin{theorem}[{\cite[Theorem $3.8$]{VN2007}}]
\label{theo:delta}
  For $\vec{\varepsilon} \in \{-1,1\}^{m_{f}}$, there exists a
  homeomorphism $\Phi$ from $B=F(M)$ to $\Delta = \Phi(B) \subset
  \R^2$ such that:
  \begin{enumerate}
  \item $\Phi_{|B\setminus \ell^{\vec{\varepsilon}}}$ is a
    diffeomorphism into its image,
  \item $\Phi_{|B_r\setminus \ell^{\vec{\varepsilon}}}$ is affine: it
    sends the integral affine structure of $B_{r}$ to the standard
    integral affine structure of $\R^2$,
  \item $\Phi$ preserves $J$: $\Phi(x,y) = (x,\Phi_{2}(x,y))$,
  \item $\Phi_{|B_r\setminus \ell^{\vec{\varepsilon}}}$ extends to a
    smooth multi-valued map from $B_{r}$ to $\R^2$ and for any $i \in
    \{1,\ldots,m_{f}\}$ and any $c \in \mathring{\ell}_{i}$, then
    \[ \lim_{\substack{(x,y) \to c \\ x < x_{i}}} {\rm d} \Phi(x,y)
    = \begin{pmatrix} 1 & 0 \\ 1 & 1 \end{pmatrix}
    \lim_{\substack{(x,y) \to c \\ x > x_{i}}} {\rm d} \Phi(x,y), \]
  \item $\Delta$ is a rational convex polygon.
  \end{enumerate}
\end{theorem}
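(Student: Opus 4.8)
The plan is to realise $\Phi$ as the \emph{developing map} of the integral affine structure of $B_{r}$, after cutting $B$ along the rays $\ell^{\vec\varepsilon}$ so as to destroy the monodromy carried by the focus\--focus values. I would first assemble the local normal forms of $F$. Near a regular value the action\--angle theorem gives a chart onto an open subset of $(\R^{2},\Z^{2})$ identifying the affine structures, any two such differing by an element of $\agl(2,\Z)$; since $J$ is the momentum map of an effective Hamiltonian $S^{1}$\--action (Definition~\ref{def:semitoric}), I would normalise every action chart so that its first component equals $J$, after which the transition maps are forced into the subgroup of $\agl(2,\Z)$ fixing the first coordinate and preserving the orientation of the second, i.e.\ into maps $(c_{1},c_{2})\mapsto (c_{1},\,n c_{1}+c_{2}+b)$ with $n\in\Z$. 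Near a transversally elliptic (resp.\ an elliptic\--elliptic) value, Eliasson's normal form shows that in such a chart $B$ is locally a half\--plane bounded by a primitive rational line (resp.\ a quadrant bounded by two of them), and that $J$ extends the partial toric picture across $\partial B$. Near a focus\--focus value $C_{i}$, \cite[Proposition~3.1]{VN2003} (the logarithmic behaviour of $\tau_{2}$ used already in Step~3) gives that the affine monodromy of $B_{r}$ around $C_{i}$, in a $J$\--preserving chart and in the positive sense, is $\begin{pmatrix} 1 & 0 \\ 1 & 1 \end{pmatrix}$.

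Next I would build $\Phi$. Since $B$ is simply connected and each $\ell_{i}^{\varepsilon_{i}}$ joins the interior point $C_{i}$ to $\partial B$, the cut region $B\setminus\ell^{\vec\varepsilon}$ is still simply connected, so the affine structure of $B_{r}\setminus\ell^{\vec\varepsilon}$ has trivial holonomy and the developing map yields a diffeomorphism onto its image, unique up to post\--composition by $\agl(2,\Z)$; using the $J$\--preserving charts I would fix this freedom so that $\Phi(x,y)=(x,\Phi_{2}(x,y))$ with $\partial_{y}\Phi_{2}>0$ (property (3)), and the elliptic local models let $\Phi$ extend continuously up to $\partial B$, giving properties (1) and (2). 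It remains to glue across each $\mathring\ell_{i}$: passing from $x<x_{i}$ to $x>x_{i}$ winds once around $C_{i}$, so the two one\--sided limits of $\mathrm{d}\Phi$ differ by the monodromy matrix above, which is property (4); as that matrix fixes the vertical direction $(0,1)$ along which $\ell_{i}$ runs, the two one\--sided limits of $\Phi$ itself match on $\mathring\ell_{i}$, so $\Phi$ extends continuously across the cuts, and then over the isolated points $C_{i}$ by a limiting argument, producing the desired homeomorphism $\Phi\colon B\to\Delta:=\Phi(B)$.

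Finally, to see that $\Delta$ is a rational convex polygon I would use that, by \cite[Theorem~3.4]{VN2007}, the fibres of $J$ are connected, so each slice $B\cap\{x=c_{1}\}$ is a closed interval $[f^{-}(c_{1}),f^{+}(c_{1})]$, and since $\Phi$ preserves $J$ the slice $\Delta\cap\{x=c_{1}\}$ is $[\tilde f^{-}(c_{1}),\tilde f^{+}(c_{1})]$ with $\tilde f^{\pm}=\Phi_{2}(\cdot,f^{\pm}(\cdot))$. I would then prove $\tilde f^{-}$ convex and $\tilde f^{+}$ concave, each being piecewise integral\--affine with finitely many pieces --- the break points being the ``toric'' corners of $\partial B$ (finite in number because $F$ is proper) together with the $x$\--coordinates of the focus\--focus values whose ray $\ell_{i}^{\varepsilon_{i}}$ meets the relevant arc of $\partial B$ --- with rational slopes by integrality of the charts, with the corners of $\partial B$ bending the graph the convex way by the transversally\--elliptic model, and with the break points coming from the cuts bending it by the fixed amount prescribed by the focus\--focus monodromy. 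This yields $\Delta=\{(x,y)\, |\ x\in J(M),\ \tilde f^{-}(x)\le y\le \tilde f^{+}(x)\}$, a rational convex polygon, which is property (5). The delicate part of the whole argument, and the one I expect to be the main obstacle, is precisely this last sign bookkeeping: one must check that the focus\--focus monodromy matrix, oriented consistently with the chosen $\vec\varepsilon$, always lowers the slope of $\tilde f^{+}$ and raises that of $\tilde f^{-}$ irrespective of the signs $\varepsilon_{i}$, and that this is compatible with the local convexity at the elliptic strata --- in effect a local\--to\--global convexity statement adapted to the failure of $B$ to be a manifold\--with\--corners at the $C_{i}$, which is exactly what the argument of \cite{VN2007} supplies.
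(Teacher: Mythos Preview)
The paper does not supply its own proof of this statement: Theorem~\ref{theo:delta} is quoted as \cite[Theorem~3.8]{VN2007} and used as a black box in Step~5, so there is nothing in the present paper to compare your argument against. That said, your outline is essentially the strategy of the cited reference --- develop the integral affine structure of $B_{r}$ after slitting along the rays $\ell_{i}^{\varepsilon_{i}}$, normalise so that the first action coordinate is $J$, extend to the elliptic boundary strata via the Eliasson and Miranda--Zung normal forms, read off the jump of $\mathrm{d}\Phi$ across each cut from the focus\--focus monodromy, and then establish rational convexity of $\Delta$ by a piecewise\--affine analysis of the upper and lower boundary functions $\tilde f^{\pm}$ --- and your identification of the sign bookkeeping in that last step as the crux is accurate.
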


The polygon $\Delta$ is the sought invariant; in fact, the real
invariant is a family of such polygons, more precisely the set of all
such $\Delta$ for all possible choices of $\vec{\varepsilon} \in
\{-1,1\}^{m_{f}}$ and all their images by linear maps leaving the
vertical direction invariant. We refer the reader to \cite[Section
$4.3$]{PeVN2009} for more precise statements.

\begin{prop}\label{prop:affine}
  Given any $\vec{\varepsilon} \in \{-1,1\}^{m_{f}}$, the
  corresponding polygon $\Delta=\Delta_{\vec\varepsilon}$ is
  determined by the integral affine structure of $B_r$.
\end{prop}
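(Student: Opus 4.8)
The plan is to prove that the homeomorphism $\Phi$ of Theorem~\ref{theo:delta}, and hence its image $\Delta=\Delta_{\vec\varepsilon}$, is uniquely determined — up to precisely the group of affine transformations modulo which the polygonal invariant is defined — by the integral affine structure on $B_r$ together with the sets $B=F(M)$ and $\{C_1,\dots,C_{m_f}\}$ recovered in Steps~1 and~3. (These last two data are themselves encoded in the affine structure: $B$ is the closure of $B_r$, and the $C_i$ are exactly the interior points at which the integral affine structure of $B_r$ fails to extend, because of the nontrivial linear holonomy $\begin{pmatrix}1&0\\1&1\end{pmatrix}$ of a focus-focus value.)

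First I would check that $N:=B_r\setminus\ell^{\vec\varepsilon}$ is connected and simply connected. By \cite[Theorem~3.4]{VN2007} one has $B_r=\mathring B\setminus\{C_1,\dots,C_{m_f}\}$; since $\mathring B$ is an open simply connected planar set, it is a disk, and removing the $m_f$ disjoint arcs $\ell_i^{\varepsilon_i}$, each joining the puncture $C_i$ to $\partial B$, leaves a simply connected set $N=\mathring B\setminus\ell^{\vec\varepsilon}$. By item~(2) of Theorem~\ref{theo:delta}, $\Phi|_N$ is a chart compatible with the integral affine structure of $B_r$, i.e.\ a developing map for that structure on $N$.

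Next comes the rigidity statement. Let $\Phi'$ be any other homeomorphism satisfying the conclusions of Theorem~\ref{theo:delta} for the same $\vec\varepsilon$, with image $\Delta'$. On the connected set $N$ the transition $g:=\Phi'\circ(\Phi|_N)^{-1}$ is locally of the form $x\mapsto Ax+b$ with $A\in\mathrm{GL}(2,\Z)$; since $\mathrm{GL}(2,\Z)$ is discrete, $A$ is constant on $N$, whence $b$ is too, so $g$ is a single global element of $\mathrm{GL}(2,\Z)\ltimes\R^2$. Writing item~(3) for both $\Phi$ and $\Phi'$ forces the linear part of $g$ to be $\begin{pmatrix}1&0\\k&e\end{pmatrix}$ with $k\in\Z$, $e=\pm1$, and the translation part to have vanishing first component; item~(4), which requires $g$ to conjugate the fixed jump matrix $\begin{pmatrix}1&0\\1&1\end{pmatrix}$ to itself, then forces $e=1$. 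Hence $g(x,y)=(x,\,y+kx+c)$ for some $k\in\Z$, $c\in\R$. Since $N$ is dense in $B$ and $\Phi,\Phi'$ are continuous, $\Phi'=g\circ\Phi$ on all of $B$, so $\Delta'=g(\Delta)$. As the maps $(x,y)\mapsto(x,y+kx+c)$ are exactly those (fixing $J$, leaving the vertical direction invariant, together with vertical translations) modulo which $\Delta_{\vec\varepsilon}$ is defined in \cite{VN2007,PeVN2009}, this shows that $\Delta_{\vec\varepsilon}$ is determined by the integral affine structure of $B_r$, which proves the proposition.

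The main obstacle I anticipate is the bookkeeping in this last step: verifying carefully that $\Phi|_N$ is genuinely compatible with the maximal integral affine atlas of $B_r$, that the transition map on the connected domain $N$ is a single affine map rather than a locally constant family, and that items~(3)--(4) cut the residual ambiguity down to exactly the group appearing in the definition of the polygonal invariant — neither larger nor smaller. The simple-connectedness of $N$ and the extension of the identity $\Phi'=g\circ\Phi$ from $N$ to $B$ by density are routine once set up.
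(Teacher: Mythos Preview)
Your argument is correct and reaches the same conclusion as the paper, but the route differs in one notable respect. Both proofs start from the observation that $N=B_r\setminus\ell^{\vec\varepsilon}$ is simply connected and that $\Phi|_N$ is a developing map for the integral affine structure, hence determined by that structure up to an element of $\mathrm{GL}(2,\Z)\ltimes\R^2$. The divergence is in how the passage from $\Phi|_N$ to $\Phi|_B$ (and hence to $\Delta=\Phi(B)$) is handled. The paper treats this as the ``crucial observation'': it invokes the Miranda--Zung and Eliasson normal forms near transversally elliptic and elliptic--elliptic boundary points to show that action variables, and hence the developing map, extend canonically across $\partial B$. You instead take two candidate homeomorphisms $\Phi,\Phi'$ supplied by Theorem~\ref{theo:delta}, establish $\Phi'=g\circ\Phi$ on $N$, and then extend this identity to all of $B$ by density and the continuity already guaranteed by Theorem~\ref{theo:delta}; the normal-form analysis is thereby bypassed entirely. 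Your approach is lighter and perfectly sufficient for the proposition as stated; the paper's approach is more constructive and dovetails with the explicit reconstruction of $\Phi$ carried out in Step~5. Your use of items~(3) and~(4) to cut the ambiguity down to $\begin{pmatrix}1&0\\k&1\end{pmatrix}$ plus a vertical translation is correct (the commutation $ATA^{-1}=T$ with $T=\begin{pmatrix}1&0\\1&1\end{pmatrix}$ indeed forces $e=1$), and, as you anticipate, the only care needed is matching this residual group exactly to the equivalence defining the polygonal invariant in \cite{VN2007,PeVN2009}.
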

\begin{proof}
  Once a starting point $c_0\in B_r$ is chosen (which, by convention,
  is taken to be on the left of the first focus-focus critical value,
  when these values are ordered by non-decreasing abscissae), the
  affine map $\Phi_{|B_r\setminus \ell^{\vec{\varepsilon}}}$ is
  uniquely determined by the affine structure. Indeed, the set
  $B_r\setminus \ell^{\vec{\varepsilon}}$ is simply connected and
  $\Phi$ is the developing map of the induced affine structure. The
  crucial observation is that it follows from the construction
  in~\cite{VN2007} that the map $\Phi_{|B\setminus
    \ell^{\vec{\varepsilon}}}$ is the natural extension of
  $\Phi_{|B_r\setminus \ell^{\vec{\varepsilon}}}$ to the boundary of
  $B_r\setminus \ell^{\vec{\varepsilon}}$ away from the half-lines
  $\ell^{\vec{\varepsilon}}$, and this boundary consists of elliptic
  (or transversally elliptic) singularities. Precisely, the extension
  is obtained as follows. Near a 1-dimensional family of transversally
  elliptic singularities, we use the normal form due to Miranda and
  Zung \cite{MirZung}: if $c_{e}$ is a transversally elliptic value,
  there exist a symplectomorphism $\varphi$ from a neighborhood of
  $F^{-1}(c_{e})$ in $M$ to a neighborhood of $\{ I = x = \xi = 0 \}$
  in ${\rm T}^*S^1 \times \R^2$ with coordinates
  $((\theta,I),(x,\xi))$ and standard symplectic form ${\rm d}I \wedge
  {\rm d} \theta + {\rm d}\xi \wedge {\rm d} x$, which sends the set
  $\{ F=\text{constant} \}$ to the set $\{I=\text{constant},
  x^2+\xi^2= \text{constant}\}$, and a smooth function $g$ such that
  \[ (F \circ \varphi^{-1})(\theta,I,x,\xi) = g(I,x^2+\xi^2) \] where
  $\varphi^{-1},g$ are defined. Let $(\mathcal{A}_1,\mathcal{A}_2)$ be
  an affine chart for $B_r$ inside this neighborhood where the normal
  form holds. Since $(I,(x^2+\xi^2)/2)$ is also an affine chart, there
  exists a matrix $A = \begin{pmatrix} \alpha & \beta \\ \gamma &
    \delta \end{pmatrix} \in \text{GL}(2,\Z)$ such that
  \begin{equation}
  \mathcal{A}_{1} (c) = \alpha I + \beta (x^2 + \xi^2)/2; \quad
  \mathcal{A}_{2} (c) = \gamma I+ \delta (x^2 + \xi^2)/2\label{equ:actions2}
\end{equation}
where $m = \varphi^{-1}(\theta,I,x,\xi) \in M$ is a regular point for
$F$ and $c = F (m)$. Since $\alpha,\beta,\gamma, \delta$ are constant,
Formula~\eqref{equ:actions2} naturally gives the required extension of
$(\mathcal{A}_1,\mathcal{A}_2)$ (and hence $\Phi$) to the boundary
near $c_{e}$. Near an elliptic-elliptic point, we can apply the same
reasoning, using Eliasson's normal form \cite{Elia}.
\end{proof}

In view of the proposition, step 5 will be treated as soon as we show
that the integral affine structure on $B_r$ can be recovered from the
joint spectrum ${\rm JointSpec}(P,Q)$ up to $\mathcal{O}(\hbar^2)$,
which can be done as follows.  From the previous steps, we can recover
$F(M)$ and the position of the focus-focus values
$C_{i}=(x_{i},y_{i})$, $1 \leq i \leq m_{f}$. Therefore, we know the
set of regular values $B_r$, which is the interior of $F(M)$ minus the
focus-focus critical values.

In a small ball $B_0\subset B_r$, we can construct action variables
$\left(\mathcal{A}_{1}, \mathcal{A}_{2}\right)$. Indeed, from step 2
we can recover the functions $\tau_{1}, \tau_{2}$ on $B_{0}$. Fixing a point $s \in B_{0}$, we can pick for every point $c
\in B_{0}$ a smooth path $\gamma_{c}: [0,1] \to B_{0}$ such that
$\gamma_{c}(0) = s$, $\gamma_{c}(1)=c$ and compute
\[ \mathcal{A}_{1}^{(0)}(c) = c_{1}; \quad \mathcal{A}_{2}^{(0)}(c) =
\int_{0}^1 \left\langle \begin{pmatrix} \zeta_{1}(\gamma_{c}(t)) \\
    \zeta_{2}(\gamma_{c}(t)) \end{pmatrix}, \gamma_{c}'(t)
\right\rangle {\rm d}t, \] where we recall that $\tau_{i} =
2\pi\zeta_{i}$ for $i=1,2$. In this way, we have constructed the
integral affine structure of $B_r$ from the joint spectrum. In remains
to apply Proposition~\ref{prop:affine} to construct $\Phi$, and hence
$\Delta$ by Theorem~\ref{theo:delta}.

~\\
\paragraph{\bf Step 6}
It only remains to prove that we can recover the height invariant
associated with each focus-focus singularity from the joint
spectrum. In order to do so, let $i \in \{1,\ldots,m_{f}\}$ and
consider a sequence $(Y_{n})_{n \in \N}$ of points of $B$ such that
every $Y_{n}$ has the same abscissa as $C_{i}$ and ordinate smaller
than the one of $C_{i}$, and such that $Y_n \underset{n \to + \infty}{\longrightarrow} C_i$. We may assume that $Y_{0}$ lies on the
boundary of $B$. Let $\Phi$ be a homeomorphism from $B$ to $\Delta$ as
in the previous step. Then the point
\[ P = \lim_{n \to +\infty} \Phi(Y_{n}) \] is well-defined and $P$
is the image of the focus-focus value in the polygon $\Delta$. The
height invariant that we seek is the difference between the ordinate
of $P$ and the ordinate of $\Phi(Y_{0})$.

\begin{remark}
  Another way of obtaining the height invariant associated with
  $C_{i}$ would have been to count the joint eigenvalues lying on a vertical
  line below $C_{i}$ and use a Weyl law to relate this number to the
  volume of the set $J^{-1}(C_{i}) \cap \{ H < H(m_{i}) \}$. Although it may
  seem more natural than our method, it is also more technical, and
  that is why we have chosen not to treat the problem this way.

\end{remark}

\vskip 1em

{\emph{Acknowledgements}.  Part of this paper was written at the
  Institute for Advanced Study (Princeton, NJ) during the visit of the
  last two authors in July 2014, and they are very grateful to Helmut
  Hofer for the hospitality. AP was partially supported by  NSF DMS-1055897, the STAMP Program at the ICMAT research institute
  (Madrid), and ICMAT Severo Ochoa grant Sev-2011-0087. VNS is partially supported by the Institut Universitaire de
  France, the Lebesgue Center (ANR Labex LEBESGUE), and the ANR
  NOSEVOL grant.}

\noindent

\medskip\noindent

\noindent
\noindent
{\bf Yohann Le Floch} \\
Institut de Recherches Math\'ematiques de Rennes\\
Universit\'e de Rennes 1\\
Campus de Beaulieu\\
F-35042 Rennes cedex, France\\\\
{\em E-mail:} \texttt{yohann.lefloch@univ-rennes1.fr}\\
{\em Website}: \url{http://perso.univ-rennes1.fr/yohann.lefloch/}

\medskip\noindent

\noindent
\\
{\bf {{\'A}lvaro Pelayo}}\\
Department of Mathematics\\
University of California, San Diego\\
9500 Gilman Drive  \# 0112\\
La Jolla, CA 92093-0112, USA\\
{\em E\--mail}: \texttt{pelayo.alv@gmail.com}

\medskip\noindent

\noindent
\noindent
{\bf San V\~u Ng\d oc} \\
Institut Universitaire de France
\\
\\
Institut de Recherches Math\'ematiques de Rennes\\
Universit\'e de Rennes 1\\
Campus de Beaulieu\\
F-35042 Rennes cedex, France\\\\
{\em E-mail:} \texttt{san.vu-ngoc@univ-rennes1.fr}\\
{\em Website}: \url{http://blogperso.univ-rennes1.fr/san.vu-ngoc/}

\end{document}